\newcommand{\B}{{\cal B}}
\newcommand{\C}{{\mathbb C}}
\newcommand{\cA}{{\cal A}}
\newcommand{\cF}{{\cal F}}
\newcommand{\cG}{{\cal G}}
\title{Skew characteristic polynomial of graphs and embedded graphs}
\author{Riya Dogra and Sergei Lando}
\abstract{
	We introduce a new one-variable polynomial invariant of graphs, which we call the skew characteristic polynomial. For an oriented simple graph, this is just the characteristic polynomial of its anti-symmetric adjacency matrix. For non-oriented simple graphs the definition is different, but for a certain class of graphs (namely, for intersection graphs of chord diagrams), it gives the same answer if we endow such a graph with an orientation induced by the chord diagram.

    We prove that this invariant satisfies Vassiliev's $4$-term relations and determines therefore a finite type knot invariant. We investigate the behavior of the polynomial with respect to the Hopf algebra structure on the space of graphs and show that it takes a constant value on any primitive element in this Hopf algebra.

    We also provide a two-variable extension of the skew characteristic polynomial to embedded graphs and delta-matroids. The $4$-term relations for the extended polynomial prove that it determines a finite type invariant of multi-component links.
	}
\keywords{Knot invariants, Graph polynomials, $4$-term  relations, Weight system, Characteristic polynomial, Delta-matroid}
\begin{document}

\begin{flushright}
To the memory of Sergei Duzhin\\
(1956--2015)
\end{flushright}

\section{Introduction}

To each graph, one can associate its adjacency matrix.
The characteristic polynomial of the adjacency matrix, as well as its roots,
are important invariants of graphs, and their study constitutes
the spectral theory of graphs.

The goal of the present paper is to introduce and start the study of another polynomial
graph invariant, which we call the \emph{skew characteristic polynomial}.
For an oriented graph, the skew characteristic polynomial
is just the characteristic polynomial of its antisymmetric adjacency matrix.
For a~non-oriented graph, however, the relationship between
its adjacency matrix and the skew characteristic polynomial is less
straightforward.

Most directly the skew characteristic polynomial can be defined for
intersection graphs of chord diagrams. Chord diagrams appeared in
V.~Vassiliev's theory of finite type invariants of knots as a~tool
for describing these invariants. Their intersection graphs admit
a family of natural orientations, and it happens that for all
orientations in this family the characteristic polynomial of
the antisymmetric adjacency matrix of the resulting oriented graph is the same.
We show that these characteristic polynomials satisfy Vassiliev's
$4$-term relations and determine thus a~knot invariant.

For an arbitrary intersection graph of a~chord diagram,
we define its skew characteristic polynomial
as this common characteristic polynomial.
We extend it by linearity to the Hopf algebra of chord diagrams modulo
$4$-term relations. It happens that for graphs with at least two vertices,
this polynomial becomes a~constant when restricted to the subspace of primitive elements in
the Hopf algebra.

The free term of the skew characteristic polynomial of an intersection
graph is either~$1$ or~$0$ depending on whether the adjacency matrix
of this graph is or is not non-degenerate over the field of two elements.
This function on graphs, called the \emph{non-degeneracy} of a~graph,
is naturally extended to arbitrary graphs, not necessarily intersection graphs.
Simple graphs also span a~Hopf algebra, and we define the skew characteristic
polynomial of graphs as the multiplicative polynomial graph invariant
whose value on primitives of degree at least~$2$ is a~constant,
and whose free term coincides with the non-degeneracy of the graph.
We show that this new graph invariant satisfies $4$-term relations for graphs.

Graphs embedded in two-dimensional surfaces do not span a~Hopf algebra,
which does not allow one to define their skew characteristic polynomial
directly. However, a~construction due to A.~Bouchet associates to
an embedded graph its delta-matroid, and delta-matroids span a~Hopf algebra.
Since non-degeneracy can be naturally defined for embedded graphs and depends
only on its delta-matroid, the skew characteristic polynomial admits a
natural extension to delta-matroids, whence to embedded graphs.
We show that the skew characteristic polynomial of embedded graphs satisfies
$4$-term relations for them and determines thus a~finite type link invariant.

Everywhere in the paper the ground field is~$\C$, the field of complex numbers,
if otherwise is not stated explicitly.

The paper is organized as follows. In Sec.~\ref{s1}, we
give the definition
of skew characteristic polynomial for chord diagrams and graphs.
In Sec.~\ref{s2}, we recall the definitions of $4$-term relations
for chord diagrams and graphs and show that the skew characteristic
polynomial satisfies these $4$-term relations determining thus a
finite type knot invariant.

Section~\ref{s3} is devoted to the definition of the skew characteristic polynomial
of embedded graphs and delta-matroids. We show that this polynomial satisfies $4$-term relations
and defines, therefore, a~finite type invariant of multicomponent links.

Section~\ref{s4} addresses a~number of natural questions and problems about
the invariant we introduce.

The authors are grateful to M.~Kazarian for valuable suggestions.
The work of S.~Lando was partly
funded by the RSF grant 23-11-00150 Mathematics of modern mathematical physics.

\section{Skew characteristic polynomial for graphs}\label{s1}

In this section we give the definition of the skew characteristic polynomial
for graphs. The definition is based on the notion of non-degeneracy of a~graph,
and the structure of Hopf algebra of graphs. We show that for intersection
graphs of chord diagrams the skew characteristic polynomial thus
defined coincides with the characteristic polynomial of the antisymmetric adjacency
matrix of the intersection graph supplied with a~natural orientation.

\subsection{Definition of the skew characteristic polynomial}
Given an abstract oriented simple graph $\vec G=(V,\vec E)$ with vertex set $V$,
and edge set $\vec E$, its adjacency matrix $A_{\vec G}$, is defined as
\[
    A_{\vec G} = (a_{ij}), \text{ where }a_{ij}=
\begin{cases}
            \phantom{-}1, & \text{if $(i,j) \in \vec E(G)$;}\\
            -1, & \text{if $(j,i) \in \vec E(G)$;}\\
             \phantom{-}0, & \text{otherwise.}
            \end{cases}
\]

The adjacency matrix of an oriented graph is antisymmetric.
We define the skew characteristic polynomial of an abstract oriented simple graph as the characteristic polynomial of its adjacency matrix,
that is

\begin{definition}
Let $\vec G$ be an abstract oriented graph and let $A_{\vec G}$ be its adjacency matrix. The \emph{skew characteristic polynomial}
of the graph $\vec G$, denoted $Q_{\vec G}$, is defined as
\[
Q_{\vec G}(u):= \det(uI - A_{\vec G}).
\]
\end{definition}

Our goal now is to define the skew characteristic polynomial of an abstract simple \textit{non-oriented} graph.
The first step consists in defining another invariant, the non-degeneracy of the graph.

For a~simple graph $G=(V,E)$, its adjacency matrix $A_G$ is, as usual,
the symmetric matrix over the field of two elements $\{0,1\}$ given by
\[
    A_G = (a_{ij}), \text{ where }a_{ij} =
\begin{cases}
            \text{1}, & \text{if $(i,j) \in E(G)$;}\\
            0, & \text{otherwise.}
            \end{cases}
\]

\begin{definition}
The \emph{non-degeneracy} $\nu(G)$ of a~graph~$G$ is equal to~$1$ provided the
matrix~$A_G$ is non-degenerate over the field of two elements, and it is~$0$ otherwise.
By definition, the non-degeneracy of the empty graph is~$1$.
\end{definition}

In particular, the non-degeneracy of any graph with an odd number of vertices is~$0$,
only a~graph with even number of vertices can be non-degenerate.
Note that we consider non-degeneracy as taking values in the field~$\C$ of complex numbers,
not in the field of two elements.

\begin{definition}
The \emph{skew characteristic polynomial} $Q:G\mapsto Q_G(u)$ is the graph invariant taking values in the
ring $\C[u]$ of polynomials in a~single variable~$u$, defined by $Q_G(u)=\nolinebreak\sum_k q_k(G)u^{|V(G)|-k}$, where
\[
q_k(G)=\sum_{\substack{U\subset V(G),\\|U|=k}}\nu(G(U)),
\]
and $G(U)$ is the subgraph of~$G$ induced by the subset~$U$ of the set~$V(G)$
of its vertices.
\end{definition}

\begin{theorem}\label{thMP}
The skew characteristic polynomial possesses the following properties:
\begin{itemize}
\item The degree of $Q_G$ is the number of vertices $|V(G)|$ in~$G$;
\item The polynomial $Q_G$ is even (that is, contains no monomials of odd degree) if the number of vertices $|V(G)|$ in~$G$ is even,
and it is odd (containing no monomials of even degree) otherwise;
\item All the nonzero coefficients in~$Q_G$ are positive integers;
\item The leading coefficient of~$Q_G$ is~$1$, and the coefficient of $u^{|V(G)|-2}$
is the number $|E(G)|$ of edges in~$G$;
\item The free term $Q_G(0)$ coincides with the non-degeneracy $\nu(G)$ of~$G$;
\item It is multiplicative, that is, if $G=G_1\sqcup G_2$ is a~disjoint union of two graphs $G_1$ and $G_2$, then $Q_{G}=Q_{G_1}\cdot Q_{G_2}$;
\item For the graph with a~single vertex, the polynomial is equal to~$u$.
\end{itemize}
\end{theorem}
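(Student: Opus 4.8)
The plan is to read off every item directly from the defining formula
$Q_G(u)=\sum_k q_k(G)\,u^{|V(G)|-k}$ with $q_k(G)=\sum_{|U|=k}\nu(G(U))$, after first recording two elementary facts about the nondegeneracy~$\nu$ on which the whole argument rests. (i) \emph{Multiplicativity}: if $H=H_1\sqcup H_2$ then $A_H=A_{H_1}\oplus A_{H_2}$ is block-diagonal over the field of two elements, so $\det A_H=\det A_{H_1}\cdot\det A_{H_2}$ and hence $\nu(H)=\nu(H_1)\nu(H_2)$. (ii) \emph{Vanishing in odd size}: as already noted in the text, a symmetric zero-diagonal matrix over $\mathbb{F}_2$ has even rank, so $\nu(H)=0$ whenever $H$ has an odd number of vertices. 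I would also record the two smallest cases: the empty induced subgraph contributes the $0\times 0$ determinant, which is $1$, so $\nu(\emptyset)=1$; and for $U=\{i,j\}$ the matrix $A_{G(U)}=\bigl(\begin{smallmatrix}0&a_{ij}\\ a_{ij}&0\end{smallmatrix}\bigr)$ has determinant $a_{ij}$ over $\mathbb{F}_2$, so $\nu(G(U))=1$ exactly when $\{i,j\}\in E(G)$.

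With these in hand most items are immediate. Since $q_0(G)=\nu(\emptyset)=1$, the degree equals $|V(G)|$ and the leading coefficient is~$1$; since $q_{|V(G)|}(G)=\nu(G(V(G)))=\nu(G)$, the free term equals $\nu(G)$; each $q_k$ is a sum of values in $\{0,1\}$, hence a nonnegative integer, so every nonzero coefficient is a positive integer; fact~(ii) forces $q_k=0$ for odd~$k$, so only exponents $|V(G)|-k$ with $k$ even occur, and these all have the parity of $|V(G)|$, giving the evenness/oddness statement; the single vertex is the case $q_0u+q_1=u$, because $\nu$ of a one-vertex graph is~$0$. For the coefficient of $u^{|V(G)|-2}$ I would sum the two-vertex computation: $q_2(G)=\sum_{|U|=2}\nu(G(U))$ counts exactly the pairs $\{i,j\}$ that are edges, i.e. $|E(G)|$.

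The one genuinely structural step is multiplicativity of~$Q$. For $G=G_1\sqcup G_2$ every $U\subseteq V(G)$ splits uniquely as $U=U_1\sqcup U_2$ with $U_i\subseteq V(G_i)$, and the induced subgraph factors as $G(U)=G_1(U_1)\sqcup G_2(U_2)$; applying~(i) gives
\[
q_k(G)=\sum_{k_1+k_2=k} q_{k_1}(G_1)\,q_{k_2}(G_2),
\]
which is precisely the coefficient convolution showing $Q_G=Q_{G_1}Q_{G_2}$ (the degree bookkeeping $|V(G)|=|V(G_1)|+|V(G_2)|$ matches automatically). I expect the only point needing care to be the normalization $\nu(\emptyset)=1$: taken literally with the convention that the empty graph has nondegeneracy~$0$, the leading coefficient would vanish, so I would make explicit that in the induced-subgraph sum the empty subset contributes the determinant of the $0\times0$ matrix, namely~$1$. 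This is the unique subtlety; everything else is a direct unwinding of the definition.
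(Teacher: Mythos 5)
Your proof is correct and is exactly the direct unwinding of the definition that the paper compresses into the single line ``all the properties follow immediately from the definition''; the facts you isolate (block-diagonal multiplicativity of $\nu$, even rank of symmetric zero-diagonal matrices over $\mathbb{F}_2$, the two-vertex determinant computation, the coefficient convolution for disjoint unions) are precisely what that assertion implicitly relies on. The subtlety you flag is real: the paper's stated convention $\nu(\emptyset)=0$ is inconsistent with the leading coefficient being~$1$ and with $Q$ of the one-vertex graph being~$u$, and your resolution --- letting the empty induced subgraph contribute the $0\times 0$ determinant, namely~$1$, to $q_0$ --- is the reading under which the theorem and the paper's later examples (e.g.\ $Q_{K_n}$) come out right.
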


All the properties follow immediately from the definition.

\subsection{Hopf algebra of graphs}

We recall the structure of the Hopf algebra of graphs introduced by Joni and Rota in~\cite{JR}.
Let $\cG$ be the infinite dimensional vector space over $\C$ spanned freely by all graphs.
Any graph invariant with values in a~vector space is extended to~$\cG$ by linearity;
below, we denote this extension by the same letter as the invariant itself.

The \textit{product} of two graphs is defined as their disjoint union, and extended
to a~\emph{multiplication}~$m$ of linear combinations of graphs by linearity.
This makes $\cG$ into a~graded commutative algebra, where the grading is induced by the number of vertices, and the unit is the empty graph,
\[
\cG = \cG_0 \oplus \cG_1 \oplus \cG_2 \oplus \hdots.
\]

The vector space $\cG$ is endowed with a~coalgebra structure by the \textit{comultiplication}
\[
\mu : \cG \longrightarrow \cG \otimes \cG
\]
defined as the \emph{coproduct} of a~graph as follows:
\[
\mu(G) = \sum_{V_1 \subset V(G)} G(V_1) \otimes G(V(G)\setminus V_1),
\]
where $G(V_1)$ is the induced subgraph of $G$, with the set of vertices $V_1$, and the sum is taken over all subsets of the vertex set. This comultiplication is extended to $\cG$ by linearity. The multiplication $m$, and the comultiplication $\mu$
both respect the grading:

\[
m : \cG_{n_1} \otimes \cG_{n_2} \longrightarrow \cG_{n_1 + n_2}
\]
\[
\mu : \cG_n \longrightarrow (\cG_0 \otimes \cG_n) \oplus (\cG_1 \otimes \cG_{n-1}) \oplus \hdots (\oplus\cG_n \otimes \cG_0).
\]

\begin{theorem}[\cite{JR}]
    The multiplication and the comultiplication defined above make the algebra of graphs into a~commutative cocommutative Hopf algebra.
\end{theorem}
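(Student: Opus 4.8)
The plan is to verify the bialgebra axioms directly and then invoke gradedness to produce the antipode. The algebra structure is immediate: disjoint union of graphs is associative and commutative up to canonical isomorphism, and the empty graph (the generator of $\cG_0$) serves as the unit, so $(\cG,m)$ is a graded commutative algebra. For the coalgebra structure I would introduce the counit $\epsilon:\cG\to\C$ as the projection onto $\cG_0$, sending the empty graph to $1$ and every nonempty graph to $0$. Coassociativity follows by computing both $(\mu\otimes\mathrm{id})\circ\mu$ and $(\mathrm{id}\otimes\mu)\circ\mu$ applied to a graph $G$: each yields the sum over all ordered partitions $V(G)=V_1\sqcup V_2\sqcup V_3$ of the triple tensor $G(V_1)\otimes G(V_2)\otimes G(V_3)$, so the two agree. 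The counit axiom holds because in $\mu(G)$ exactly the term with $V_1=\emptyset$ survives application of $\epsilon$ on the left factor, returning $G$, and symmetrically on the right. Cocommutativity holds because $V_1\mapsto V(G)\setminus V_1$ is an involution on subsets, so the flip of tensor factors merely reindexes the sum.

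The one genuinely nontrivial ingredient is the compatibility between $m$ and $\mu$, that is, that $\mu$ is an algebra homomorphism (equivalently, that $m$ is a coalgebra homomorphism). The key combinatorial observation is that for a disjoint union $G=G_1\sqcup G_2$ there are no edges joining $V(G_1)$ to $V(G_2)$, so any vertex subset $U\subseteq V(G)$ splits uniquely as $U=U_1\sqcup U_2$ with $U_i\subseteq V(G_i)$, and the induced subgraph factors as $G(U)=G_1(U_1)\sqcup G_2(U_2)$. Substituting this into $\mu(G_1\sqcup G_2)=\sum_{U}G(U)\otimes G(V(G)\setminus U)$ and separating the independent sums over $U_1$ and $U_2$ reorganizes the expression precisely into $\mu(G_1)\cdot\mu(G_2)$, where the product on $\cG\otimes\cG$ is $(a\otimes b)(c\otimes d)=(ac)\otimes(bd)$. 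Checking that $\epsilon$ is multiplicative and that the unit map is a coalgebra morphism is then routine.

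Having established that $(\cG,m,\mu,\epsilon)$ is a bialgebra, I would finish by noting that it is graded by the number of vertices and connected, i.e.\ $\cG_0=\C$. For any connected graded bialgebra the antipode $S$ exists and is uniquely determined; concretely it arises from the convolution identity $m(S\otimes\mathrm{id})\mu=\epsilon$ by recursion on the grading, giving $S(G)=-G-\sum_{\emptyset\neq V_1\subsetneq V(G)}S(G(V_1))\cdot G(V(G)\setminus V_1)$, which is well defined because each summand on the right has strictly smaller degree than $G$. The main obstacle is thus the compatibility step; once the factorization of induced subgraphs of a disjoint union is in hand, every remaining verification is formal.
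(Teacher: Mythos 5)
Your proof is correct. Note that the paper itself gives no argument for this theorem at all --- it simply cites Joni--Rota \cite{JR} --- so your direct verification supplies exactly the standard argument that the citation points to: the only genuinely non-formal step is, as you identify, the compatibility $\mu(G_1\sqcup G_2)=\mu(G_1)\cdot\mu(G_2)$, which rests on the fact that an induced subgraph of a disjoint union factors as the disjoint union of induced subgraphs, and the antipode then comes for free from connectedness ($\cG_0=\C$) by the usual recursion (your convolution identity should read $m(S\otimes\mathrm{id})\mu=\eta\circ\epsilon$, with $\eta$ the unit map, but this is purely notational and the recursion you write down is the right one).
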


According to the Milnor--Moore theorem, any graded cocommutative
Hopf algebra is a~polynomial algebra in its primitive elements.
An element~$p$ in a~Hopf algebra is said to be \emph{primitive} if
\[
\mu(p)=1\otimes p+p\otimes1.
\]
Primitive elements form a~graded vector subspace in a~graded Hopf algebra.
We denote the subspace of primitive elements in~$\cG_n$ by $P(\cG_n)$.

Monomials in elements of grading smaller than~$n$ span the subspace $D(\cG_n)\subset\cG_n$
of \emph{decomposable elements} in~$\cG_n$; in other words, decomposable elements are linear
combinations of disconnected graphs. Milnor--Moore theorem implies that
$\cG_n= P(\cG_n)\oplus D(\cG_n)$, for all $n=1,2,3,\dots$, and there is a~natural
projection $\pi_n:\cG_n\xrightarrow[]{\small{D(\cG_n)}}P(\cG_n)$ whose kernel is~$D(\cG_n)$.
Together, these projections~$\pi_n$ form the projection~$\pi:\cG\to P(\cG)$.

There is an explicit formula (\cite{lando1997primitive, S}) for this projection,
which expresses it as the logarithm $\pi=\log(\textrm{Id})$
of the identity mapping $\textrm{Id}:\cG\to\cG$: if~$G$ is a nonempty graph, then
\[
\pi:G\mapsto G-1!\sum_{I_1\sqcup I_2=V(G)} G(I_1)G(I_2)+2!\sum_{I_1\sqcup I_2\sqcup I_3=V(G)} G(I_1)G(I_2)G(I_3)-\dots,
\]
where each summation runs over the partitions of the set $V(G)$ of vertices of~$G$ into unordered nonempty parts.
The logarithm here is understood in the sense of the convolution product, that is, $\textrm{Id}$ is represented as the
sum $\textrm{Id}=1+\textrm{Id}_0$, where $1$ is the identity in $\cG_0$ and~$0$ in all $\cG_n$, $n\ge1$,
and $\textrm{Id}_0=\textrm{Id}-1$, so that
\[
\pi=\log(\textrm{Id})=\log(1+\textrm{Id}_0)=\textrm{Id}_0-\frac12(\textrm{Id}_0\otimes \textrm{Id}_0)\circ\mu+
\frac13(\textrm{Id}_0\otimes{\rm Id}_0\otimes{\rm Id}_0)\circ\mu^{\otimes2}-\dots.
\]

The following property describes the behavior of skew characteristic polynomial
with respect to the Hopf algebra structure.

\begin{theorem}
For any graph~$G$ with at least two vertices,
the value $Q_{\pi(G)}$ of the invariant~$Q$ on the projection $\pi(G)$ to the subspace of primitives
along the subspace of decomposable elements in the
Hopf algebra~$\cG$ of graphs is a~constant.
\end{theorem}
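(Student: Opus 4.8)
The plan is to exploit the multiplicativity of $Q$ established in Theorem~\ref{thMP}. Since $Q$ is a multiplicative graph invariant (it sends disjoint unions to products), it is an algebra homomorphism from the Hopf algebra $\cG$ to $\C[u]$. The key structural fact I would use is the general principle that, for a graded connected commutative Hopf algebra, the value of any such homomorphism on a primitive element is governed entirely by the ``linear part'' of the invariant, because the projection $\pi$ kills all decomposable contributions. Concretely, I would first observe that the coefficients $q_k(G)=\sum_{|U|=k}\nu(G(U))$ are themselves \emph{additive} graph invariants in a controlled sense: each is a sum over induced subgraphs of the graph-level function $\nu$.

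First I would compute $Q_{\pi(G)}(u)$ coefficient by coefficient. Writing $Q_G(u)=\sum_k q_k(G)\,u^{|V(G)|-k}$, and using that $\pi$ is the logarithm of the identity in the convolution algebra, I would apply the explicit formula for $\pi$ to each coefficient functional. The crucial computation is to show that when one forms the logarithm, the only coefficient of $Q$ that survives with a nonconstant contribution is the leading one. The guiding idea is that an additive invariant $f$ (one satisfying $f(G_1\sqcup G_2)=f(G_1)+f(G_2)$) is already primitive-compatible, whereas a multiplicative invariant becomes, after taking $\log$, essentially its additive ``primitive part.'' Since $\log Q = \sum_k (\text{additive functionals in the }q_k)$, I would track which powers of $u$ appear. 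The leading term $u^{|V(G)|}$ comes from $q_0=\nu(\text{empty})=1$ on every vertex-subset of size zero, giving the multiplicative normalization, and the degree of $Q_G$ equals $|V(G)|$, which is itself additive under disjoint union.

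The heart of the argument is therefore the following: because $\deg Q_G=|V(G)|$ is additive, taking the logarithm of $Q$ in the convolution sense corresponds, after substituting a formal variable and extracting the appropriate asymptotics in $u$, to extracting the coefficient of $u^{|V(G)|-|V(G)|}=u^0$, i.e.\ the free term, together with the leading behavior. I expect the cleanest route is to show directly that $Q_{\pi(G)}(u)$ has degree equal to $\deg Q_G$ minus the contribution of the first decomposable correction, and that all genuinely $G$-dependent information collapses. I would verify that $\nu$, as a graph function valued in $\C$, contributes to $q_k$ only through the combinatorics of induced subgraphs, and that under $\pi$ these sum-over-subsets functionals are precisely annihilated except for a universal constant determined by how $\nu$ of the full vertex set and its complement interact in the coproduct $\mu(G)=\sum_{V_1}G(V_1)\otimes G(V\setminus V_1)$.

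The main obstacle, and the step I expect to require the most care, is controlling the cancellation in the alternating sum defining $\pi$ for the \emph{intermediate} coefficients $q_k$ with $0<k<|V(G)|$. It is not a priori obvious that these all vanish on $\pi(G)$; one must show that each $q_k$, viewed as a map $\cG\to\C$, behaves under convolution-logarithm like the $k$-th coefficient of a product of power series, so that $\log$ isolates exactly one additive piece. I would handle this by introducing the generating-function viewpoint: regard $Q$ as a single algebra homomorphism $\cG\to\C[u]$, note that $\C[u]$ under ordinary multiplication is the target, and use that $Q_{\pi(G)}=(\log_\star Q)(G)$ where $\log_\star$ is the logarithm in the convolution algebra $\Hom(\cG,\C[u])$. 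Since $Q$ is group-like with respect to multiplication in the target (being an algebra homomorphism), its convolution-logarithm lands in the space of \emph{infinitesimal characters}, i.e.\ derivations into $\C[u]$; evaluating such a derivation and checking that on any $G$ with $|V(G)|\ge 2$ the polynomial-valued output is forced to be constant will be the decisive and most delicate point.
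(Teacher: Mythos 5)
Your setup is correct as far as it goes: multiplicativity of $Q$ (Theorem~\ref{thMP}) does give $Q_{\pi(G)}=(\log_\star Q)(G)$, where $\log_\star$ is the logarithm in the convolution algebra $\Hom(\cG,\C[u])$, and $\log_\star Q$ is indeed an infinitesimal character. But there is a genuine gap at exactly the point you flag as ``the decisive and most delicate point'': being an infinitesimal character only means vanishing on decomposable elements, which is automatic for \emph{any} evaluation at $\pi(G)$ and carries no information whatsoever about the values taken on primitives. An infinitesimal character with values in $\C[u]$ is determined by freely prescribed polynomial values on a complement of the decomposables (e.g.\ on connected graphs), so nothing in the general character/infinitesimal-character formalism can force those values to be constants. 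In other words, your final step is not a verification left to the reader; it \emph{is} the theorem, restated in convolution language, and your proposal contains no argument for it.

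The concrete ingredient that closes the gap, staying within your framework, is a factorization of $Q$ itself: by definition of the coefficients $q_k$, one has $Q=\nu\star P$ in $\Hom(\cG,\C[u])$, where $\nu$ is the ($\C$-valued) nondegeneracy with the convention $\nu(\emptyset)=1$, and $P(G)=u^{|V(G)|}$ is a character. Since $\cG$ is cocommutative and $\C[u]$ is commutative, the convolution algebra is commutative, so $\log_\star Q=\log_\star\nu+\log_\star P$. A direct computation shows $P=\exp_\star\zeta_1$, where $\zeta_1$ sends every one-vertex graph to $u$ and all other graded components to $0$; hence $\log_\star P=\zeta_1$ vanishes on $\cG_n$ for $n\ge2$. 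Since $\nu$ takes values in the constants $\C$ and convolution preserves $\Hom(\cG,\C)\subset\Hom(\cG,\C[u])$, the term $\log_\star\nu$ is constant-valued, and therefore $Q_{\pi(G)}=(\log_\star\nu)(G)\in\C$ for every $G$ with at least two vertices. Note that this completed argument is genuinely different from the paper's proof, which proceeds by induction on $|V(G)|$ using the reconstruction formula $G=\pi(G)+\sum_{|I|>1}\prod_{\iota\in I}\pi(G(\iota))$ and a matching of free terms; the convolution route, once the factorization $Q=\nu\star P$ is supplied, is arguably shorter and identifies the constant explicitly as $(\log_\star\nu)(G)$.
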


\begin{proof}
Let us make use of the formula~\cite{CKL2020, S}
\[
G=\sum_I\prod_{\iota\in I}\pi(G(\iota)),
\]
where the summation is carried over all partitions~$I=(\iota_1,\dots,\iota_k)$ of the set $V(G)$ of vertices
of the graph~$G$ into disjoint nonempty parts, and $G(\iota)$ is the subgraph of~$G$
induced by~$\iota\subset V(G)$. This formula allows one to reconstruct a~graph from the projection
to primitives of all its induced subgraphs. Separate on the right-hand side the projection
to primitives of~$G$ itself:
\begin{equation}\label{e1}
 G=\pi(G)+\sum_{I,|I|>1} \prod_{\iota\in I}\pi(G(\iota)).
\end{equation}

Now suppose the assertion of the theorem is true for all graphs with~$2,\dots,n$ vertices,
and let~$G$ be a~graph with~$n+1$ vertices. We want to prove that the coefficient of~$u^k$
in~$Q_{\pi(G)}$ is~$0$ for all~$k=1,\dots,n+1$.

The coefficient of $u^k$ in $Q_G$ is $\sum_{U,|U|=n+1-k}\nu(G(U))$.
Pick a~subset $U\in V(G)$ such that $|U|=n+1-k$. Its complement $V(G)\setminus U$ consists of~$k$ vertices.

By the induction hypothesis, a~partition~$\iota$ of the set $V(G)$ of vertices of~$G$
in the second summand in the right-hand side of equation~\eqref{e1} can contribute to
the coefficient of~$u^k$ only if this partition contains exactly~$k$ sets of size~$1$.
For a~given choice of $1$-element parts, let~$U$ denote the
complimentary set of vertices. Then the contribution to the coefficient of~$u^k$ is as follows:
compute the free term of the complementary sum
\[
 \sum_{J} \prod_{j\in J}\pi(G(U)(j)),
\]
where the summation runs over all partitions of the set~$U$ into nonempty disjoint subsets.

Equation~\eqref{e1} asserts that the last expression is nothing but the graph~$G(U)$,
and the free term of the polynomial $Q_{G(U)}$, which is equal to $\nu(G(U))$,
coincides with the free term of the skew characteristic polynomial of the sum on the right.

Hence, for each~$k=1,\dots,n+1$, the contribution of the non-degeneracy of an induced subgraph
with~$n+1-k$ vertices to the coefficient of~$u^k$ in the skew characteristic polynomial $Q_G$
coincides with the contribution of its complement to all the terms on the right in equation~\eqref{e1}
but the first one. This means that the contribution to~$u^k$ of $Q_{\pi(G)}$ is~$0$, as required.
\end{proof}

\begin{remark}
The unknown referee attracted our attention to the fact that the assertion of the theorem
follows from the representation of the skew-characteristic polynomial as the convolution
of non-degeneracy and the $4$-invariant taking a~graph~$G$ to $u^{|V(G)|}$.
\end{remark}

\subsection{Justification: skew characteristic polynomial for intersection graphs of chord diagrams}

The goal of this section is to justify the definition of the skew characteristic polynomial
of graphs given in the previous one. We define here the skew characteristic polynomial
of chord diagrams and show that it depends only on the intersection graph of the chord diagram
and coincides with the skew characteristic polynomial of this graph.
Chord diagrams carry considerable combinatorial information about Vassiliev, or finite type, invariants of knots,
whence their study often becomes imperative in learning about knots. We recall some preliminaries.
For a~detailed study, see, for example,~\cite{Chmutov2012IntroductionTV} or~\cite{Lando2003GraphsOS},

\begin{definition}
A \emph{chord diagram} (of order $n$) is an oriented circle together with a~set of $n$ disjoint pairs of distinct points considered up to diffeomorphisms of the circle that preserve the orientation.
\end{definition}

In a~graphical presentation of chord diagrams, the two points in each pair are usually
connected by a~chord (which is shown as either a~segment or an arc).

\begin{definition}[Intersection graph of a~chord diagram]
Let $C$ be a~chord diagram. The \emph{intersection graph} $g(C)$ has the vertex set
whose elements are in one-to-one correspondence
with the chords of $C$; there is an edge between two vertices if and only if the corresponding chords
intersect one another in $C$,
i.e., their ends lie on the circle in the alternating order.
\end{definition}

The intersection graph $g(C)$ is a~simple non-oriented graph.
Take a~point on the underlying circle of~$C$ different from all the ends
of the chords; we call this point `a cut point'.
 A choice of a~cut point allows one to orient $g(C)$ as follows.

We cut the chord diagram $C$ at the cut point and get a~linear chord diagram
(also called `long chord diagram' from the terminology of long knots, or `arc diagram')
as follows: the circle turns into an oriented line, and the chords become arcs with ends on this line.
Then we number the chords from $1$ to~$n$ according to the order in which
the left ends of the corresponding arcs follow the underlying line
and orient each edge in~$g(C)$ from the vertex with the smaller number to that with the greater one.
The resulting oriented graph will be denoted by $\overrightarrow{g(C)}$.
Of course, the orientation of the intersection graph depends on the choice of the cut point.
Nevertheless, the characteristic polynomial of the antisymmetric adjacency matrix
of $\overrightarrow{g(C)}$ is independent of this choice.

\begin{proposition}\label{pcpi}
The skew characteristic polynomial
\[
\det\left(uI-A_{\overrightarrow{g(C)}}\right)
\]
of the oriented characteristic graph of the chord diagram~$C$ does not
depend on the choice of the cut point determining the orientation.
\end{proposition}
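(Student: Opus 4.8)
The plan is to reduce the statement to the invariance of the characteristic polynomial under a single \emph{elementary move} of the cut point, and then to analyze this move carefully. Any two cut points on the circle of $C$ are joined by sliding one of them, step by step, past the chord endpoints separating it from the other; at each step the cut point crosses exactly one endpoint. It therefore suffices to show that crossing a single endpoint does not change $\det(uI-A_{\overrightarrow{g(C)}})$.

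Consider the linear chord diagram obtained from a given cut point, with chords numbered $1,\dots,n$ by the order of their left ends. The first point on the line is the left end of chord~$1$; moving the cut point just past it sends this endpoint to the far right of the line, while every other point keeps its relative position. First I would observe that the relative order of the left ends of all chords \emph{other than} chord~$1$ is unchanged, so that the orientation of every edge not incident to chord~$1$ is preserved. The new left end of chord~$1$ is its old right end, which now lies in the interior of the line.

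The key combinatorial point, and the step I expect to require the most care, is to determine exactly which edges change orientation. Because the old left end of chord~$1$ is the very first point of the line, any chord~$b$ intersecting chord~$1$ must have its endpoints appear along the line in the order: left end of~$1$, left end of~$b$, right end of~$1$, right end of~$b$. After the move the left end of~$b$ precedes the new left end of chord~$1$, so $b$ now receives a smaller number than chord~$1$, and the edge between them reverses its orientation. Hence the move flips the orientation of \emph{all} edges incident to chord~$1$ and of no others, while also relabeling the vertices.

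Finally I would translate these two effects into matrix operations. Relabeling the vertices conjugates $A_{\overrightarrow{g(C)}}$ by a permutation matrix, and flipping the orientation of every edge at a single vertex negates the corresponding row and column, which is conjugation by the diagonal matrix $D$ having $-1$ in that position and $+1$ elsewhere. The new adjacency matrix is thus $M A_{\overrightarrow{g(C)}} M^{-1}$ for a suitable invertible matrix~$M$ (a product of a permutation matrix and~$D$), and conjugation leaves the characteristic polynomial $\det(uI-A_{\overrightarrow{g(C)}})$ unchanged. Chaining the elementary moves then yields independence of the cut point.
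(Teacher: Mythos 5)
Your proof is correct and follows essentially the same route as the paper's: reduce to moving the cut point past a single chord endpoint, observe that exactly the edges incident to the former vertex~$1$ reverse orientation, and realize this as conjugation by a permutation matrix times the diagonal matrix with a single $-1$, which preserves the characteristic polynomial. The only difference is that you carefully verify the combinatorial claim about which edges flip, which the paper simply asserts.
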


\begin{proof}
In order to prove Proposition~\ref{pcpi}, it suffices to show that the characteristic
polynomial of the adjacency matrix of the oriented intersection graph $\overrightarrow{g(C)}$
remains unchanged under moving the cut point to the neighboring arc of the circle
of~$C$ in the positive direction. Such a~move results in a~renumbering of the chords;
all the oriented edges incident to the former vertex~$1$ change their orientation.
The adjacency matrix $A_{\overrightarrow{g(C)}}$
of the oriented intersection graph is replaced with the matrix
(up to a~simultaneous cyclic renumbering of the columns and the rows of the matrix)
\[
I_1\cdot A_{\overrightarrow{g(C)}}\cdot I_1,
\]
where~$I_1$ is the identity $n\times n$ matrix with~$1$ replaced by~$-1$ in the entry $(1,1)$.
Of course, such a~move does not change the characteristic polynomial of the matrix,
since $I_1=I_1^{-1}$.
\end{proof}

This property allows us to give the following definition.

\begin{definition}[Skew-characteristic polynomial of a~chord diagram]
Let~$C$ be a~chord diagram of order $n$. The \emph{skew characteristic polynomial} $Q_C(u)$ of the chord diagram $C$
is the skew characteristic polynomial $Q_{\overrightarrow{g(C)}}(u)$ of the oriented intersection graph of~$C$,
which is independent of the cut point determining the orientation.
\end{definition}

\begin{theorem}\label{thFT}
The free term $Q_C(0)$ of the characteristic polynomial of a~chord diagram~$C$ coincides
with the non-degeneracy $\nu(g(C))$ of the intersection graph $g(C)$ of~$C$.
\end{theorem}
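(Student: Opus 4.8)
The plan is to reduce the statement to a single numerical fact about the integer antisymmetric matrix $A:=A_{\overrightarrow{g(C)}}$ and then to isolate the one genuinely nontrivial ingredient. First I would write the free term directly as a determinant: by definition $Q_C(0)=\det(0\cdot I-A)=(-1)^n\det A$, where $n$ is the number of chords of $C$, i.e.\ the size of $A$. The entries of $A$ lie in $\{0,\pm1\}$, and reduction modulo $2$ sends every $-1$ to $1$, so $A\bmod 2$ is exactly the symmetric $\{0,1\}$ adjacency matrix $A_{g(C)}$ of the unoriented intersection graph. Since the determinant commutes with the ring homomorphism $\Z\to\F_2$, this gives $\det A\equiv \det A_{g(C)}=\nu(g(C))\pmod 2$, the last equality being the definition of nondegeneracy. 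If $n$ is odd, then $A$ is an odd-sized antisymmetric matrix, so $\det A=0$ and $Q_C(0)=0$; on the other side $A_{g(C)}$ is an odd-sized alternating form over $\F_2$, hence degenerate, so $\nu(g(C))=0$. The odd case is thus settled, and we may assume $n$ even.

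For $n$ even, $A$ is antisymmetric, so $\det A=\mathrm{Pf}(A)^2\ge 0$ and $(-1)^n\det A=\det A$. Consequently the whole theorem collapses to the single claim
\[
\det A_{\overrightarrow{g(C)}}\in\{0,1\},\qquad\text{equivalently}\qquad \mathrm{Pf}(A_{\overrightarrow{g(C)}})\in\{0,\pm1\}.
\]
Indeed, once this bound is available, the congruence $\det A\equiv\nu(g(C))\pmod 2$ pins down the value: $\det A=0$ exactly when $\nu(g(C))=0$ and $\det A=1$ exactly when $\nu(g(C))=1$, so $Q_C(0)=\det A=\nu(g(C))$. The entire proof therefore comes down to bounding this determinant.

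To establish the key bound I would argue by induction on the number of chords, realizing the inductive step as a surgery on the chord diagram that mirrors an integral symplectic reduction of $A$. Concretely, choose a crossing, i.e.\ a pair of interlacing chords contributing a $2\times2$ block $\begin{pmatrix}0&1\\-1&0\end{pmatrix}$ of determinant $1$; simultaneous integral row/column operations clearing the remaining entries in this pair of rows and columns preserve antisymmetry and multiply $\det A$ by a unit, reducing it to the determinant of the antisymmetric Schur complement. The content of the step is to show that this Schur complement is again (integrally congruent to) the oriented intersection matrix of a chord diagram with two fewer chords, so that the induction hypothesis keeps the determinant in $\{0,1\}$; the natural candidate for the surgery is the interlacement pivot on the chosen crossing, whose effect on circle graphs is classical, the integer-and-orientation refinement being what must be checked. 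Conceptually one expects the bound because $A$ is the matrix of the intersection pairing of the core curves of the bands in the ribbon surface associated to $C$, and an intersection form on the first homology of an oriented surface is a restriction of a unimodular symplectic form; this could give a cleaner, coordinate-free proof.

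The main obstacle is exactly this closure-and-cancellation step, and it cannot be bypassed by working with the abstract matrix alone: the chord-diagram origin of the orientation is essential. For instance, the naive cyclic numbering of the hexagon $C_6$ produces a $\{0,\pm1\}$ antisymmetric matrix with $\mathrm{Pf}=2$, whereas $\nu(C_6)=0$; since the theorem must hold, this numbering simply cannot be induced by any chord diagram realizing $C_6$, which illustrates that the bound genuinely depends on realizability. For the honest left-endpoint orderings the signed perfect matchings occurring in the Pfaffian cancel down to $0$ or $\pm1$, and the hard part is precisely to show that this sign cancellation, enforced by the interlacement structure, is preserved under the reduction used in the induction.
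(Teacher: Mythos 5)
Your reduction is correct and, in one respect, cleaner than the paper's own argument: writing $Q_C(0)=(-1)^n\det A_{\overrightarrow{g(C)}}$, killing the odd-$n$ case by antisymmetry, and noting that reduction mod~$2$ turns $A_{\overrightarrow{g(C)}}$ into $A_{g(C)}$, so that $\det A_{\overrightarrow{g(C)}}\equiv\nu(g(C))\pmod 2$, is an elementary substitute for the paper's appeal to~\cite{S01} (connectivity of the boundary of the thickened diagram $\Leftrightarrow$ nondegeneracy over $\F_2$). You have also correctly isolated the single fact on which everything hinges: $\det A_{\overrightarrow{g(C)}}\in\{0,1\}$.

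That fact, however, is exactly where the proposal stops being a proof, and it is exactly the point the paper settles by citation: it is Theorem~3 of \cite{BarNatan1996OnTM}, which says $\det A_{\overrightarrow{g(C)}}$ is $1$ or $0$ according as the boundary of the thickened diagram is connected or not. You instead propose an induction via pivoting on a crossing and passing to the antisymmetric Schur complement, and you yourself flag that the crucial step --- that the Schur complement is again (integrally congruent to) the oriented intersection matrix of a chord diagram, with entries and signs under control --- is unverified. That step is the entire content of the theorem and is not routine: the Schur complement of a $\{0,\pm1\}$ antisymmetric matrix need not have entries in $\{0,\pm1\}$, and your own $C_6$ example shows the determinant bound is simply false for general $\{0,\pm1\}$ antisymmetric matrices, so realizability by a chord diagram must enter in an essential, as-yet-unexplained way. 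The fallback heuristic you offer --- that $A$ is ``a restriction of a unimodular symplectic form'' --- also does not suffice as stated: restrictions of unimodular symplectic forms to sublattices can have arbitrarily large determinant (the span of $e_1$ and $2f_1$ in the standard symplectic $\Z^2$ has Gram determinant $4$; indeed your $C_6$ matrix with $\mathrm{Pf}=2$ embeds in a unimodular lattice). What does work along these lines is the finer topological statement: $A$ is the intersection form on $H_1$ of the thickened diagram, a compact oriented surface with boundary; if the boundary is connected, capping it with a disk identifies $A$ with the unimodular intersection form of a closed surface, so $\det A=1$, while if there are at least two boundary components, their classes are nonzero elements of the radical of the form, so $\det A=0$. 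Until the key bound is proved by some such argument (or cited, as the paper does), the proposal has a genuine gap at its central claim.
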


\begin{proof}
This statement is, essentially, in~\cite[Theorem 3]{BarNatan1996OnTM}, which states that
the determinant $\det~A_{\overrightarrow{g(C)}}$ is either~$1$ or~$0$ depending on whether
the thickening of the chords of~$C$ provides a~connected or a~disconnected curve, respectively.
This invariant is called the Conway weight system in~\cite{BarNatan1996OnTM},
since it arises from the Conway polynomial of knots.
The fact that the resulting curve is connected if and only if the adjacency matrix $A_{g(C)}$ is
non-degenerate over the field of two elements is proved, for example, in~\cite{S01}.
\end{proof}

\begin{corollary}
The skew characteristic polynomial of a~chord diagram~$C$
coincides with the skew characteristic polynomial $Q_{g(C)}$ of its intersection graph $g(C)$.
In particular, it depends on the intersection
graph~$g(C)$ of~$C$ rather than on the chord diagram~$C$ itself; that is, if two chord
diagrams~$C_1$ and $C_2$ have isomorphic intersection graphs, then their skew characteristic
polynomials coincide, i.e.~$Q_{C_1}=Q_{C_2}$.
\end{corollary}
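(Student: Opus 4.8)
The plan is to compare the two polynomials coefficient by coefficient. Recall that for any $n\times n$ matrix $A$ one has the expansion
\[
\det(uI-A)=\sum_{k=0}^{n}(-1)^k\sigma_k(A)\,u^{n-k},
\]
where $\sigma_k(A)$ denotes the sum of all $k\times k$ principal minors of $A$. Applying this to $A=A_{\overrightarrow{g(C)}}$, where $n$ is the number of chords of~$C$, the coefficient of $u^{n-k}$ in $Q_C$ equals $(-1)^k$ times the sum, over all $k$-element subsets $U$ of the vertex set of $g(C)$, of the principal minor of $A_{\overrightarrow{g(C)}}$ indexed by~$U$. So my first step is simply to read off these signed sums of principal minors.

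The key step is to identify each principal minor with the determinant of an oriented intersection graph of a smaller chord diagram. Given $U$, let $C_U$ be the sub-chord-diagram of~$C$ obtained by retaining only the chords indexed by~$U$. Striking out the rows and columns outside~$U$ leaves exactly the antisymmetric adjacency matrix of the oriented graph induced on~$U$; and since the relative order of the chord endpoints in~$U$ with respect to the cut point of~$C$ is inherited unchanged, this induced oriented graph is precisely $\overrightarrow{g(C_U)}$. Hence the principal minor indexed by~$U$ is $\det A_{\overrightarrow{g(C_U)}}$, and Theorem~\ref{thFT} applies to it directly, giving $\det A_{\overrightarrow{g(C_U)}}=\nu(g(C_U))=\nu\bigl(g(C)(U)\bigr)$.

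It remains to dispose of the signs and the odd-cardinality terms. Any antisymmetric matrix of odd size is singular, so every principal minor indexed by a set of odd cardinality vanishes; correspondingly, $\nu$ of any graph on an odd number of vertices is~$0$, as noted after the definition of nondegeneracy. Thus only even~$k$ contribute on both sides, and for even~$k$ the factor $(-1)^k$ is~$1$. Therefore the coefficient of $u^{n-k}$ in $Q_C$ equals $\sum_{|U|=k}\nu\bigl(g(C)(U)\bigr)=q_k(g(C))$, which is exactly the coefficient of $u^{n-k}$ in $Q_{g(C)}$ by definition. This yields $Q_C=Q_{g(C)}$, and the final assertion follows at once, since $Q_{g(C)}$ is assembled solely from the nondegeneracies of the induced subgraphs of $g(C)$ and hence depends only on the isomorphism class of~$g(C)$.

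The main obstacle, and really the only step demanding care, is the identification in the second paragraph: one must check that the orientation inherited on~$U$ is genuinely of the form $\overrightarrow{g(C_U)}$, so that Theorem~\ref{thFT} is legitimately applicable to each minor rather than to some arbitrarily oriented induced subgraph. Here Proposition~\ref{pcpi} is reassuring, since it guarantees that the particular cut point is immaterial; hence no ambiguity arises from fixing one cut point for~$C$ and restricting it to every~$C_U$ simultaneously.
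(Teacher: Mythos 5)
Your proof is correct and follows essentially the same route as the paper: the paper's (much terser) argument also identifies the coefficient of $u^{n-k}$ in $\det(uI-A_{\overrightarrow{g(C)}})$ with the sum of nondegeneracies of the $k$-chord subdiagrams and then invokes Theorem~\ref{thFT}. Your version merely makes explicit the details the paper suppresses --- the principal-minor expansion, the sign $(-1)^k$ together with the vanishing of odd-order antisymmetric determinants, and the check that the inherited orientation on a subdiagram is of the form $\overrightarrow{g(C_U)}$.
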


\begin{proof}
The coefficient of $u^{n-k}$ in the characteristic polynomial of the anti-symmetric adjacency matrix
$A_{\overrightarrow{g(C)}}$ is the sum over all $k$-chord subdiagrams of~$C$ of the non-degeneracies
of these subdiagrams. Since the non-degeneracy of a~chord diagram coincides with the non-degeneracy
of its intersection graph, this coefficient coincides with $q_k(g(C))$.
\end{proof}

For a~chord diagram~$C$, the value of the skew characteristic polynomial on its projection~$\pi(C)$
to the subspace of primitives admits a~nice combinatorial interpretation.
This value is an integer, which is~$0$ if the number of chords in~$C$ is odd.
If the number of chords is even, then a~sign can be assigned to each
Hamiltonian cycle in $\overrightarrow{g(C)}$: the sign is~$+1$ provided
the number of arrows in this cycle pointing in the same direction is even, and it is~$-1$
otherwise. For the proof of the following statement, which is an assertion about
the non-degeneracy of chord diagrams, see~\cite{BarNatan2015ProofOA}.

\begin{proposition}
The value $Q_{\pi(C)}$ of the skew characteristic polynomial on the
projection of a~chord diagram~$C$ with an even number of chords
to the subspace of primitives is twice the difference between the number of
positive Hamiltonian cycles and the number of negative Hamiltonian
cycles in $\overrightarrow{g(C)}$, which is independent of the cut point in~$C$
determining the orientation of~$g(C)$.
\end{proposition}

Note that \emph{no orientation of the $5$-wheel or $3$-prism} (see Fig.~\ref{fR3}) \emph{leads to an
oriented graph for which twice the number of Hamiltonian cycles counted with signs coincides
with the value of the non-degeneracy on the projection of the graph to the subspace of primitives}.
To prove this for the $5$-wheel, for example, we remark that each of its edges
belongs to an even number of Hamiltonian cycles. This means that changing
the orientation of an edge preserves residue of the number of Hamiltonian cycles
modulo~$4$. Now, picking an arbitrary orientation of the edges in the $5$-wheel,
we can compute this residue, which is~$-1$. Hence, under neither choice of the
orientation this number can be~$-3$, while the value of~$Q$ on the
projection of the $5$-wheel to primitives is~$-6$.
This means, in particular, that the skew characteristic polynomial of
non-oriented graphs cannot be reduced to the characteristic polynomial
of these graphs with a~certain orientation chosen.

In~\cite{BarNatan2015ProofOA, Kulakova2014OnAW}, the number of Hamiltonian cycles
in oriented intersection graphs of chord diagrams counted
with signs is related to the
weight system defined by the Lie algebra $sl(2)$, which comes from the
well-known knot invariant called the
colored Jones polynomial.

\subsection{Examples}

\begin{proposition}
For complete graphs~$K_n$ on~$n$ vertices, we have
\[
Q_{K_n}(u)=\sum_{k=0}^{\lfloor n/2 \rfloor} \genfrac{(}{)}{0pt}{}{n}{2k}u^{n-2k}.
\]
\end{proposition}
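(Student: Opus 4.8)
The plan is to reduce the claim to a single computation of nondegeneracy, namely that of the complete graphs $K_k$ over the field $\F_2$. First I would unwind the definition of the skew characteristic polynomial. Every induced subgraph of $K_n$ on a $k$-element vertex subset is again a complete graph $K_k$, and there are exactly ${n\choose k}$ such subsets, so
$$
q_k(K_n)=\sum_{U\subset V(K_n),\,|U|=k}\nu(K_n(U))={n\choose k}\,\nu(K_k).
$$
Substituting into the definition $Q_{K_n}(u)=\sum_{k=0}^n q_k(K_n)u^{n-k}$ yields $Q_{K_n}(u)=\sum_{k=0}^n{n\choose k}\nu(K_k)u^{n-k}$, so the entire statement follows once I determine for which $k$ the complete graph $K_k$ is nondegenerate.

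The key step is therefore the computation of $\nu(K_k)$. The adjacency matrix $A_{K_k}$ over $\F_2$ is the reduction modulo $2$ of the integer matrix $J_k-I_k$, where $J_k$ is the all-ones $k\times k$ matrix (so $J_k-I_k$ has zeros on the diagonal and ones off it). Since the determinant commutes with the ring homomorphism $\Z\to\F_2$, I would compute $\det(J_k-I_k)$ over $\Z$ and then reduce. The eigenvalues of $J_k$ are $k$ with multiplicity one and $0$ with multiplicity $k-1$, so the eigenvalues of $J_k-I_k$ are $k-1$ and $-1$ (the latter with multiplicity $k-1$), whence $\det(J_k-I_k)=(k-1)(-1)^{k-1}\equiv k-1\pmod 2$. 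Thus $A_{K_k}$ is nondegenerate over $\F_2$ precisely when $k$ is even, i.e.\ $\nu(K_k)=1$ for $k$ even and $\nu(K_k)=0$ for $k$ odd.

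Finally I would assemble the pieces. For $k\ge1$ the computation above shows that only the even indices $k$ survive; for the boundary index $k=0$ the corresponding term ${n\choose0}\nu(K_0)u^n$ is the leading monomial, whose coefficient equals $1$ by Theorem~\ref{thMP} (consistent with the empty-determinant convention $\nu(K_0)=1$). Hence, writing $k=2j$ for the surviving even indices, I obtain
$$
Q_{K_n}(u)=\sum_{j=0}^{[n/2]}{n\choose 2j}u^{n-2j},
$$
as claimed. The only genuinely delicate point is the $\F_2$-determinant computation of the second step, together with the small check that the $k=0$ boundary term is governed by the normalization of Theorem~\ref{thMP}; everything else is bookkeeping that follows directly from the definition.
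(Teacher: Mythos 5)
Your proof is correct and takes essentially the same route as the paper: both reduce the statement to the observations that every induced subgraph of $K_n$ on $k$ vertices is again a complete graph, that there are ${n\choose k}$ of them, and that $\nu(K_k)=1$ precisely when $k$ is even. The only difference is that the paper asserts this parity fact without proof, whereas you establish it by computing $\det(J_k-I_k)=(k-1)(-1)^{k-1}$ over $\Z$ and reducing mod~$2$ --- a correct and welcome addition; your handling of the $k=0$ term (taking $\nu(K_0)=1$) is likewise the normalization forced by Theorem~\ref{thMP} and the paper's own list of examples, even though it sits oddly with the paper's earlier remark that the nondegeneracy of the empty graph is~$0$.
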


Indeed, the non-degeneracy of a~complete graph with odd number of vertices is~$0$,
and it is~$1$ if the number of vertices is even. A graph on~$n$ vertices
contains $\genfrac{(}{)}{0pt}{}{n}{2k}$ induced subgraphs with~$2k$ vertices, and for a~complete graph~$K_n$
all the induced subgraphs also are complete graphs.

The sequence of polynomials $Q_{K_n}$, for $n=0,1,2,\dots$ starts with
\[
1,\quad u,\quad u^2 +1,\quad u^3 +3u,\quad u^4 +6u^2 +1,\quad u^5 +10u^3 +5u,\quad u^6 +15u^4 +15u^2 +1,\dots
\]

Note that a~complete graph~$K_n$ is the intersection graph of a~unique chord diagram,
the one with~$n$ chords, and any choice of the cut point leads to the same orientation, up to isomorphism:
for an arbitrary numbering of the vertices each edge is oriented from a~vertex with a
smaller number to that with the greater one.

\begin{proposition}
For complete bipartite graphs~$K_{m,n}$, we have
\[
Q_{K_{m,n}}(u)=u^{m+n}+mnu^{m+n-2}.
\]
\end{proposition}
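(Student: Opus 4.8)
The plan is to read the coefficients $q_k(K_{m,n})$ off the definition $Q_{K_{m,n}}(u)=\sum_k q_k(K_{m,n})\,u^{m+n-k}$ with $q_k=\sum_{|U|=k}\nu(K_{m,n}(U))$. The structural fact that makes this tractable is that every induced subgraph of a complete bipartite graph is again complete bipartite: writing the vertex set as $A\sqcup B$ with $|A|=m$ and $|B|=n$, a subset $U$ with $|U\cap A|=a$ and $|U\cap B|=b$ induces $K_{a,b}$, and there are exactly ${m\choose a}{n\choose b}$ such subsets. Hence $q_k=\sum_{a+b=k}{m\choose a}{n\choose b}\,\nu(K_{a,b})$, and everything reduces to deciding for which pairs $(a,b)$ the graph $K_{a,b}$ is nondegenerate over $\F_2$.

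This nondegeneracy computation is the heart of the argument. I would note that the adjacency matrix of $K_{a,b}$ is block anti-diagonal, with an $a\times b$ all-ones block $J$ in one off-diagonal corner, its transpose $J^T$ in the other, and zero diagonal blocks. Over $\F_2$ the all-ones matrix $J$ has rank $1$ when $a,b\ge1$ (all of its rows coincide) and rank $0$ when $a=0$ or $b=0$; because the two off-diagonal blocks sit in disjoint sets of rows and of columns, the rank of the full matrix is $2\,\mathrm{rank}(J)$, that is, $2$ when both parts are nonempty and $0$ otherwise. Comparing this rank with the matrix size $a+b$, one finds that a nonempty $K_{a,b}$ is nondegenerate precisely when $(a,b)=(1,1)$; every other nonempty complete bipartite graph, including the edgeless graphs $K_{a,0}$ and $K_{0,b}$ of positive size, is degenerate.

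With the nondegeneracies in hand the coefficients fall out immediately. Among subsets with at least one vertex, only the single edges $K_{1,1}$ contribute, giving $q_2={m\choose 1}{n\choose 1}=mn$ and $q_k=0$ for every other $k\ge1$; the leading coefficient $q_0$ equals $1$ by the leading-coefficient property in Theorem~\ref{thMP}. Substituting into the defining sum yields $Q_{K_{m,n}}(u)=u^{m+n}+mn\,u^{m+n-2}$, in agreement with the two general facts from Theorem~\ref{thMP} that the leading coefficient is $1$ and that the coefficient of $u^{m+n-2}$ equals $|E(K_{m,n})|=mn$. The one step that requires genuine care, and hence the main obstacle, is the rank computation over $\F_2$: one must use that the all-ones block collapses to rank $1$ over the field of two elements, and must keep the degenerate positive-size zero matrices $K_{a,0}$ and $K_{0,b}$ distinct from the vacuous $0\times0$ case; the remainder is routine bookkeeping with binomial coefficients.
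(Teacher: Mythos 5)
Your proposal is correct and follows essentially the same route as the paper: the paper likewise observes that every induced subgraph of $K_{m,n}$ is complete bipartite and that a complete bipartite graph with at least two vertices in one part is degenerate, so only the $mn$ induced edges $K_{1,1}$ contribute beyond the leading term. Your explicit rank-$2$ computation over $\F_2$ simply supplies the justification for the nondegeneracy claim that the paper states without proof.
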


Indeed, each induced subgraph of a~complete bipartite graph is a~complete
bipartite graph, and the non-degeneracy of a~complete bipartite graph
having at least~$2$ elements in one of its parts is~$0$.

Similarly to complete graphs, for a~complete bipartite graph~$K_{m,n}$ there is unique
chord diagram having it as the intersection graph: a~family of $m$ mutually parallel chords
intersects transversally another family of~$n$ mutually parallel chords. However, in contrast to
complete graphs, the intersection graph $K_{m,n}$ admits several non-isomorphic orientations
depending on the choice of the cut point. If we choose the cut point between
the two families of mutually parallel chords constituting the
two parts of the graph, then all the edges will be oriented from
one part to the other one. This orientation allows one to easily compute
$Q_{K_{m,n}}(u)$.

Now let us compute the skew characteristic polynomial of the two graphs with~$6$
vertices that are not intersection graphs, namely, the $5$-wheel and the $3$-prism. These graphs are shown in Figure~\ref{fR3}.
They are the smallest graphs possessing this property.

\begin{figure}[h]
\begin{center}
\begin{picture}
(300,60)(10,40)

\thicklines

\put(55,51){\circle*{4}}
\put(55,75){\circle*{4}}
\put(34,62){\circle*{4}}
\put(76,62){\circle*{4}}
\put(40,35){\circle*{4}}
\put(70,35){\circle*{4}}
\put(55,50){\line(0,1){24}}
\put(55,51){\line(2,1){21}}
\put(55,51){\line(-2,1){21}}
\put(55,50){\line(1,-1){16}}
\put(55,50){\line(-1,-1){16}}
\put(40,35){\line(1,0){32}}
\put(55,75){\line(2,-1){21}}
\put(55,75){\line(-2,-1){21}}
\put(40,35){\line(-1,4){7}}
\put(70,35){\line(1,4){7}}

\put(255,70){\circle*{4}}
\put(255,40){\circle*{4}}
\put(235,80){\circle*{4}}
\put(275,80){\circle*{4}}
\put(235,30){\circle*{4}}
\put(275,30){\circle*{4}}
\put(235,30){\line(0,1){50}}
\put(235,30){\line(1,0){40}}
\put(235,80){\line(1,0){40}}
\put(275,30){\line(0,1){50}}
\put(255,40){\line(0,1){30}}
\put(235,30){\line(2,1){20}}
\put(235,80){\line(2,-1){20}}
\put(275,80){\line(-2,-1){20}}
\put(275,30){\line(-2,1){20}}

\end{picture}
\end{center}

		\caption{The $5$-wheel and the $3$-prism}
		\label{fR3}
\end{figure}

\begin{proposition}
The skew characteristic polynomial of the $5$-wheel is
\[
u^6 +10u^4 +10u^2 +1,
\]
while that of the $3$-prism is
\[
u^6 +9u^4 +12u^2.
\]
\end{proposition}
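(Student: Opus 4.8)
The plan is to read off most coefficients directly from Theorem~\ref{thMP} and to isolate the two genuinely combinatorial quantities that remain. Writing $Q_G(u)=\sum_k q_k(G)\,u^{6-k}$ for a six-vertex graph $G$, Theorem~\ref{thMP} fixes the leading term $u^6$, forces the odd coefficients $q_1,q_3,q_5$ to vanish (since $|V(G)|=6$ is even), identifies the coefficient $q_2$ of $u^4$ with the edge count $|E(G)|$ (which is $10$ for the $5$-wheel and $9$ for the $3$-prism), and identifies the free term $q_6$ with the nondegeneracy $\nu(G)$. Hence the only two tasks that require work are the computation of the coefficient $q_4=\sum_{|U|=4}\nu(G(U))$ of $u^2$ and of the nondegeneracy $\nu(G)$ of the whole graph.

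For $q_4$ I would first determine, once and for all, which four-vertex graphs are nondegenerate over the field of two elements. A zero-diagonal symmetric matrix over $\F_2$ is alternating, so its rank is even, and nondegeneracy means full rank~$4$; checking the eleven isomorphism types of four-vertex graphs shows that exactly four are nondegenerate --- the perfect matching $2K_2$, the path $P_4$, the ``paw'' (a triangle with one pendant edge), and the complete graph $K_4$ --- while all others, in particular the $4$-cycle $C_4$, the star $K_{1,3}$, and the ``diamond'' $K_4-e$, are degenerate. This list is the one piece of linear algebra I would verify by hand, evaluating a few $4\times4$ determinants modulo~$2$.

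With the list in hand, $q_4$ reduces to classifying the $\binom{6}{4}=15$ induced four-vertex subgraphs, organized by symmetry. For the $5$-wheel I split the subsets into those omitting the hub (four vertices of the pentagon, each inducing a $P_4$: contribution $5$) and those containing the hub together with three rim vertices; in the latter, the three rim vertices induce either a two-edge path, which together with the hub gives a diamond, or a single edge, which gives a paw, and a count yields $5$ paws and $5$ diamonds, a further contribution of~$5$, so that $q_4=10$. For the $3$-prism I split according to how the four chosen vertices distribute between the two triangles: the $(3,1)$ and $(1,3)$ splits each induce a paw ($6$ subsets, all nondegenerate), while among the nine $(2,2)$ splits the three ``aligned'' pairs induce $C_4$ and the six ``misaligned'' pairs induce $P_4$, giving $q_4=6+6=12$.

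It remains to compute $\nu(G)$ as the $\F_2$-rank of the full $6\times6$ adjacency matrix. For the $3$-prism this matrix has the block form $\left(\begin{smallmatrix}K&I\\ I&K\end{smallmatrix}\right)$ with $K=J_3-I_3$; since the blocks commute, its determinant equals $\det(K^2-I_3)=\det(J_3)=0$, whence $\nu=0$. For the $5$-wheel a direct Gaussian elimination over $\F_2$ shows the matrix has full rank, so $\nu=1$. Assembling the coefficients produces $u^6+10u^4+10u^2+1$ for the $5$-wheel and $u^6+9u^4+12u^2$ for the $3$-prism. I expect the main obstacle to be not any single determinant but the bookkeeping in the $q_4$ enumeration: one must correctly read off the isomorphism type of each induced subgraph, and it is precisely the distinction between the degenerate $C_4$ and diamond on one side and the nondegenerate $P_4$ and paw on the other that controls both answers.
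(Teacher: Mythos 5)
Your proposal is correct and takes essentially the same route as the paper, which simply states that these values follow directly from the definition: your classification of the nondegenerate four-vertex graphs ($2K_2$, $P_4$, the paw, $K_4$), the resulting counts $q_4=10$ and $q_4=12$, and the rank computations giving $\nu=1$ for the $5$-wheel and $\nu=0$ for the $3$-prism all check out. The remaining coefficients are indeed forced by Theorem~\ref{thMP}, so your write-up is just a carefully detailed version of the paper's ``directly from definition'' computation.
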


These results can be obtained directly from the definition.

\subsection{Refined skew characteristic polynomial}

The number of connected components of the boundary of a~thickened chord diagram
is~$1$ if and only if the intersection graph of the chord diagram is non-degenerate.
This statement can be generalized:

\textit{The number of connected components of the boundary of a~thickened chord diagram~$C$
is one greater than the corank of the adjacency matrix $A_{g(C)}$ of its intersection graph
$g(C)$, considered as a~matrix over the field of two elements} (see, e.g.~\cite{S01}).

Following the suggestion of M.~Kazarian, we define the \emph{refined skew characteristic
polynomial} $\overline{Q}_G(u,v)$ in two variables $u,v$ of a~graph~$Q$ by the formula
\[
\overline{Q}_G(u,v)=\sum_{U\subset V(G)}u^{|V(G)|-|U|}v^{\textrm{corank}A_{G(U)}}.
\]
The $v$-free term of the refined skew characteristic polynomial coincides with
the original skew characteristic polynomial, $\overline Q_G(u,0)=Q_G(u)$, for any graph~$G$.

\begin{proposition}
For complete graphs~$K_n$, we have
\[
\overline Q_{K_n}(u,v)=\sum_{k=0}^{\lfloor n/2 \rfloor}\genfrac{(}{)}{0pt}{}{n}{2k}u^{n-2k}+v\sum_{k=0}^{\lfloor (n-1)/2 \rfloor}\genfrac{(}{)}{0pt}{}{n}{2k+1}u^{n-2k-1}.
\]
\end{proposition}

Indeed, the corank of the adjacency matrix of a~complete graph on odd number of vertices is~$1$.

For a~chord diagram, one can define its refined skew characteristic polynomial either
as the refined skew characteristic polynomial of its intersection graph, or, equivalently, as a~sum
of the corresponding monomials over all chord subdiagrams, with the degree of the variable~$w$
equal to the number of connected components of the boundary of the thickened subdiagram,
minus~$1$.

\section{\texorpdfstring{$4$}{4}-term relations and knot invariants}\label{s2}

One of the fundamental results in the theory of invariants of finite type, due to M.~Kontsevich, is that
over a~field of characteristic~$0$ every `weight system' of order $n$ comes from a~Vassiliev invariant of type $n$,
which is well-defined up to Vassiliev invariants of order at most $n-1$, see~\cite{BarNatan1995OnTV}.
The goal of this section is to show that the skew characteristic polynomial for chord
diagrams, as well as that for graphs, satisfies the $4$-term relation and defines thus a~knot invariant.
The same is true for the refined skew characteristic polynomial.

\subsection{4-term relations for chord diagrams and matrices}
\begin{definition}
A function~$f$ on chord diagrams is a~\emph{weight system} if it satisfies the 4-term relation,
i.e. the following alternating sum is equal to zero.
\end{definition}

\begin{center}
\begin{tabular}
{c c c c c c c c}
    $f$ \Big(\raisebox{-.4\height}{\includegraphics[width=1.1cm, height=1.45 cm]{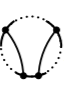}}\Big) & - & $f$\Big(\raisebox{-.4\height}{\includegraphics[width=1.2cm, height=1.25cm]{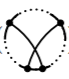}}\Big) & + & $f$\Big(\raisebox{-.4\height}{\includegraphics[width=1.2cm, height=1.35cm]{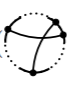}}\Big) & - & $f$\Big(\raisebox{-.4\height}{\includegraphics[width=1.1cm, height=1.40cm]{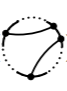}}\Big) & $=0$
\end{tabular}
\end{center}

In the above diagrams, the chords whose end points lie on the dotted part of the circle remain fixed throughout the expression.
Replacement of the first chord diagram in the $4$-term relation with the second one is called the \emph{first Vassiliev move},
and with the fourth one the \emph{second Vassiliev move}. Such a~move is defined by a~pair of chords having two neighboring ends.
We denote by~$\cA_n$ the vector space spanned by all chord diagrams with~$n$ chords modulo the subspace spanned by linear
combinations of chord diagrams entering the $4$-term relations.

We now want to give the vector space $\cA=\oplus_{n\geq 0}\cA_n$, the structure of an algebra.
Let $C_1$, $C_2$ be two chord diagrams of orders $m$ and $n$ respectively. Define their \textit{product} as their connected sum, i.e., for each diagram, mark two points on the circle such that the marked arc does not have an endpoint of a~chord; then join $C_1$ and $C_2$ along those arcs respecting the orientation (in all the figures,
the outer circle of the diagrams is assumed to be oriented counter-clockwise by default) as shown in the example below:

\begin{center}
   \raisebox{-.5\height}{\includegraphics[width=2.8cm, height=1.25cm]{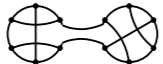}} $=$ \raisebox{-.5\height}{\includegraphics[width=1.5cm, height=1.5cm]{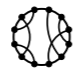}}
\end{center}

Extend this map using linearity to get the mapping
\[
m:\cA_m\otimes\cA_n\rightarrow\cA_{m+n}.
\]

The following statement is well-known, see, e.g.~\cite{Lando2003GraphsOS}.

\begin{lemma}
The product $m$ is well-defined modulo the 4-term relations.
\end{lemma}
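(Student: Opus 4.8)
The plan is to show that the connected-sum product does not depend on the two choices made in its definition --- the gluing arc selected on $C_1$ and the one selected on $C_2$ --- once we pass to $\cA_{m+n}$. Any two admissible choices of arcs on a single factor are joined by a sequence of \emph{elementary} moves in which the chosen arc is pushed across one neighbouring chord end, so it suffices to treat one such elementary move. First I would rewrite the glued diagram to see what an elementary move does: reading the result of the connected sum as a single circle carrying the $2m$ ends of $C_1$ followed by the $2n$ ends of $C_2$, pushing the gluing arc of one factor across one of its chord ends amounts, after a harmless rotation of the circle, to sliding the \emph{entire other factor} --- a self-contained block $B$ of chords, every chord of which has both of its ends inside $B$ --- past a single chord end $f$ belonging to a chord $c$ of the first factor. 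Thus the whole statement reduces to one claim: \emph{sliding a self-contained block $B$ past a single external chord end $f$ produces a chord diagram equal to the original one modulo the $4$-term relations.}

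To establish this claim I would slide $f$ across $B$ one chord end at a time and control the total change by the $4$-term relation in its ``moving-end'' form. Written this way, the $4$-term relation says precisely that the change produced when $f$ crosses one end of a chord $d$ is the \emph{opposite} of the change produced when $f$ crosses the other end of the same chord $d$: against a common background the two contributions are the two halves of a single alternating sum, carrying opposite signs. Since $B$ is self-contained, both ends of every chord $d$ of $B$ lie in $B$, so during the complete slide $f$ crosses both of them, and one expects the two crossings to cancel chord by chord. Granting this, the total change is zero in $\cA_{m+n}$, and running the elementary invariance along the chain of moves connecting any two choices of gluing arcs shows that $m$ descends to a well-defined map $\cA_m\otimes\cA_n\to\cA_{m+n}$.

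The main obstacle is the bookkeeping in this cancellation. In its moving-end form the $4$-term relation equates the two crossing-contributions only against one common background configuration of the remaining chords, whereas in the actual slide the ends lying between the two ends of $d$ have been traversed by $f$ in between its two crossings, so the two backgrounds do not literally coincide. Handling this is the genuine content of the lemma: one must organize the slide so that each application of the $4$-term relation is made against a matching background, for instance by an induction on the number of chords of $B$ in which the base case is a single chord with adjacent ends (a direct instance of one $4$-term relation) and the inductive step carries one chord of $B$ across $f$, reducing the resulting correction terms, via the induction hypothesis, to slides of strictly smaller self-contained blocks. Once this organization is fixed and the signs in the alternating sum are tracked, every correction cancels and the two gluings represent the same element of $\cA_{m+n}$, proving that $m$ is well defined modulo the $4$-term relations.
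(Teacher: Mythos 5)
The paper gives no proof of this lemma at all (it is stated as ``well-known''), so your attempt can only be measured against the standard argument, which is exactly the strategy you set up: reduce to the claim that sliding a self-contained block $B$ past a single external chord end $f$ of a chord $c$ does not change the class of the diagram modulo the $4$-term relations, and control the slide by the telescoping sum $\sum_i\bigl(D_{p_i}-D_{p_{i-1}}\bigr)$ over the $2|B|$ elementary crossings. That reduction, and your reading of the signs (crossing the two ends of the same chord $d$ in the same direction produces opposite changes), are both correct.

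The gap is in how you finish. The ``main obstacle'' you describe --- that the two crossings of $f$ over the two ends of a chord $d\in B$ happen against different backgrounds, so the $4$-term relation cannot be applied to them directly --- does not exist, and the inductive repair you sketch is never carried out (ending with ``once this organization is fixed and the signs are tracked, every correction cancels'' is a restatement of what must be proved, not a proof). In the $4$-term relation the four diagrams differ \emph{only} in the position of the single moving end of $c$; the two ends of $d$ are not required to be adjacent to each other, and arbitrary chords, fixed throughout, may lie on the arc between them --- this is precisely the form that corresponds, under intersection graphs, to the relation $f(G)-f(G_{ab}')=f(\widetilde{G}_{ab})-f(\widetilde{G}_{ab}')$ used elsewhere in the paper. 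Now note that during your slide nothing moves except $f$: every diagram $D_{p_i}$ has the identical chord set otherwise. Hence, for each chord $d\in B$, the four diagrams ``$f$ just before/just after the first end of $d$'' and ``$f$ just before/just after the second end of $d$'' occurring in the telescoping sum are \emph{literally} the four diagrams of one $4$-term relation, giving $\bigl(D_{p_i}-D_{p_{i-1}}\bigr)+\bigl(D_{p_j}-D_{p_{j-1}}\bigr)=0$ modulo the relations, with no mismatch to repair. Summing over $d\in B$ kills the whole telescoping sum at once --- no induction on $|B|$, no correction terms, no base case. So your proof stalls exactly at the step you yourself call ``the genuine content of the lemma,'' and it stalls because of a misreading of which configurations the $4$-term relation covers; replacing the sketched induction by the direct pairwise cancellation completes the argument.
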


\textit{Comultiplication} of chord diagrams is defined in terms of the \emph{coproduct} in $\cA$ as follows
\[
\mu:\cA_n\rightarrow\bigoplus_{k+l=n}\cA_k\otimes\cA_l
\]
\[
C \mapsto\sum_{J\subset V(C)}C_J\otimes C_{\overline{J}},
\]
where $V(C)$ is the set of chords of $C$ and $\overline{J}=V(C)\setminus J$ and $\mu$ is extended to $\cA$ by linearity.

Note that in contrast to multiplication, comultiplication does not require $4$-term relations to be well-defined:
it introduces a~coalgebra structure on the space of chord diagrams even if we do not factor out the $4$-term relations.

\begin{theorem}
The skew characteristic polynomials of chord diagrams satisfy the $4$-term relations and
whence determine a~knot invariant.
\end{theorem}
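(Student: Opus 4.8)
The plan is to deduce the $4$-term relation for the whole polynomial from the corresponding relation for its free term, the nondegeneracy, by exploiting the subdiagram expansion of the coefficients. Since the $4$-term relation is $\C$-linear and $Q_C(u)=\sum_k q_k(C)\,u^{n-k}$ (with $n$ the order of $C$), it suffices to prove that each coefficient $q_k$ satisfies it. By the Corollary of the previous subsection, for a fixed cut point and $A=A_{\overrightarrow{g(C)}}$ one has $q_k(C)=\sum_{|U|=k}\nu\bigl(g(C(U))\bigr)$, the sum of nondegeneracies over all $k$-chord subdiagrams $C(U)$; thus the entire statement reduces to a single fact about $\nu$ together with the way induced subdiagrams transform under the two Vassiliev moves.

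Next I would fix an arbitrary $4$-term relation, that is, a quadruple $C_1,C_2,C_3,C_4$ of diagrams sharing a common chord set and entering the alternating sum $C_1-C_2+C_3-C_4$, and place the cut point inside the fixed (dotted) arc so that all four are oriented compatibly. The relation is \emph{local}: both Vassiliev moves only displace an end of one and the same distinguished chord, so outside a bounded set of active chords the four diagrams are literally identical. I would then split the defining sum of $q_k(C_i)$ according to the subset $U$ and compare the four induced subdiagrams $C_1(U),\dots,C_4(U)$ for each fixed $U$. If $U$ omits the distinguished chord, the move becomes invisible and the four subdiagrams coincide, so their signed $\nu$-sum with signs $+,-,+,-$ telescopes to zero; if $U$ retains the active chords, then $C_1(U)-C_2(U)+C_3(U)-C_4(U)$ is again an admissible $4$-term relation, now on a diagram with $|U|$ chords. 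Summing over all $U$ with $|U|=k$ therefore expresses $q_k(C_1)-q_k(C_2)+q_k(C_3)-q_k(C_4)$ as an integer combination of $4$-term relations for the single function $\nu$.

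The problem thus collapses to the base case that the nondegeneracy $\nu$ obeys the $4$-term relation in each order. This is exactly Theorem~\ref{thFT}: by it $\nu(g(C))=\det A_{\overrightarrow{g(C)}}$ is the Conway weight system, which, originating from the Conway polynomial of knots, is a genuine weight system and hence satisfies the $4$-term relation. Combining this with the reduction above yields the relation for every coefficient $q_k$, hence for $Q$ itself; that $Q$ then determines a finite type knot invariant follows from Kontsevich's theorem recalled at the beginning of the section, and the same argument applies verbatim to the refined polynomial once $\nu$ is replaced by the corank statistic.

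The technical heart, and the step I expect to be the main obstacle, is the dichotomy in the second paragraph. It requires pinning down precisely which chord each Vassiliev move displaces, checking that deleting it renders all four diagrams equal so that the signs $+,-,+,-$ genuinely cancel (an incorrect sign-preserving pairing would leave a nonzero residue), and verifying that for the retained subsets the restricted quadruple is exactly one of the admissible $4$-term configurations, orientation bookkeeping included. Should this combinatorial bookkeeping become delicate, an alternative is to work entirely with the antisymmetric matrices: the four matrices $A_{\overrightarrow{g(C_i)}}$ differ only by a localized antisymmetric modification of rank at most two in the rows and columns of the active chords, and one checks $\det(uI-A_1)-\det(uI-A_2)+\det(uI-A_3)-\det(uI-A_4)=0$ directly by multilinearity of the determinant in those rows and columns. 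In either route the sign and orientation bookkeeping is the only genuinely delicate point.
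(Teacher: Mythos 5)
Your overall strategy is sound, and it is precisely the ``direct'' route that the paper mentions in a single sentence but does not carry out (reduce everything to the fact that the nondegeneracy $\nu$ satisfies the $4$-term relation); the paper's actual detailed proof is your fallback option: it establishes a $4$-term relation for the characteristic polynomial of \emph{arbitrary} antisymmetric matrices, by expanding the determinants along the first row and column and matching terms, and then obtains the chord-diagram statement as an immediate corollary. Your reduction via the subdiagram expansion of the coefficients $q_k$ is more combinatorial and leans on Theorem~\ref{thFT} (nondegeneracy equals the Conway weight system); the paper's matrix argument is heavier but yields a statement of independent interest and does not presuppose that $\nu$ is a weight system.

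However, the dichotomy that you yourself single out as the technical heart is not exhaustive, and as stated it is false. Write the four diagrams as $C$, $C'$, $\widetilde C$, $\widetilde C'$, where $a$ is the chord whose end is moved, $b$ is the other active chord, the prime toggles the crossing of $a$ with $b$ (first Vassiliev move), and the tilde is the second Vassiliev move, so that the relation reads $\bigl[f(C)-f(C')\bigr]-\bigl[f(\widetilde C)-f(\widetilde C')\bigr]=0$. Your two cases are $a\notin U$ (all four restrictions coincide) and $\{a,b\}\subseteq U$ (the restrictions form a genuine $4$-term configuration). The third case, $a\in U$ but $b\notin U$, satisfies neither claim: the restrictions $C(U)$ and $\widetilde C(U)$ are in general \emph{different} diagrams, because the second move toggles the crossings of $a$ with the chords crossing $b$, and such chords may well lie in $U$; and no $4$-term configuration on $U$ is available, since the chord $b$ needed for the move has been deleted. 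This case still contributes zero, but for a third reason: $C(U)=C'(U)$ and $\widetilde C(U)=\widetilde C'(U)$, since diagrams related by the \emph{first} move differ only in the crossing of $a$ with $b$, which becomes invisible once $b$ is deleted; hence each bracket above vanishes separately. With this case added, your argument closes up: every $U$ contributes zero, the base fact that $\nu$ satisfies the $4$-term relation is correctly justified by Theorem~\ref{thFT} (the paper notes that $\nu$ even satisfies the stronger $2$-term relation), and Kontsevich's theorem converts the weight system into a finite type knot invariant.
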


This theorem can be either proved directly (basing on the fact that the non-degeneracy of the intersection
graph of a~chord diagram satisfies the $4$-term relation) or deduced from a~similar theorem
for skew characteristic polynomial of graphs, see below. One more approach to proving this
theorem consists in proving its analogue for antisymmetric matrices, see Sec.~\ref{ssasm}.

\subsection{4-term relation for graphs}\label{ss4t}

The $4$-term relation for graphs is defined in a~way similar to that for chord diagrams, see~\cite{Lando2000OnAH}.
A \textit{graph invariant} is a~function on isomorphism classes of graphs attaining values in a~commutative group or a~commutative ring. Let $G=(V,E)$ be a~simple graph, with the vertex set $V=V(G)$ and the edge set $E=E(G)$.
Let $a,b \in V(G)$ be distinct. Define $G_{ab}'$ to be the graph obtained by deleting the edge $(a,b) \in E(G)$
if it exists and adding the edge otherwise (switching the adjacency);
this is the first Vassiliev move for graphs. The graph
$\widetilde{G}_{ab}$ is obtained in the following way.
 Let $c\in V(G)\backslash\{a,b\}$. If $c$ is connected to $b$, then we change its adjacency with $a$;
 we do nothing otherwise. This is the second Vassiliev move.
 It is straightforward to see that the operations $G\mapsto G_{ab}'$ and $G\mapsto\widetilde{G}_{ab}$ commute.

\begin{definition}
    A graph invariant is a~4-\emph{invariant} if it satisfies the \emph{four-term relation}
    \[
f(G)-f(G_{ab}')=f(\widetilde{G}_{ab})-f(\widetilde{G}_{ab}')
\]
for an arbitrary graph $G$, and any pair of vertices $a,b\in V(G)$.
\end{definition}
The space $\cG$ modulo the $4$-term relations
    \[
G - G_{ab}' - \widetilde{G}_{ab} + \widetilde{G}_{ab}' = 0
\]
is denoted by $\cF$. It inherits the Hopf algebra structure from $\cG$.
The grading on $\cF$ is induced by the number of vertices since the $4$-term relation consists of terms with the same number of vertices:
    \[
\cF = \cF_0 \oplus \cF_1 \oplus \cF_2 \oplus \hdots,
\]
where $\cF_k$ is the subspace spanned by graphs with $k$ vertices, modulo the $4$-term relations.
The mapping taking a~chord diagram to its intersection graph preserves the $4$-term relations and
is thus extended by linearity to
a graded Hopf algebra isomorphism $\cA\to\cF$.
As a~consequence, any $4$-invariant of graphs determines a~weight system and hence a~finite type invariant of knots.

The non-degeneracy of graphs is a~$4$-invariant; moreover, it satisfies the $2$-term relation:
\[
\nu(G)=\nu(\widetilde{G}_{ab})
\]
for any simple graph~$G$ and any pair of vertices $a,b\in V(G)$.
Note that the proof of Theorem~\ref{thFT} in~\cite{BarNatan1996OnTM}
shows that the determinant of the adjacency matrix also satisfies the $2$-term relation for
chord diagrams.
This fact immediately implies

\begin{theorem}
The skew characteristic polynomial of simple graphs is a~$4$-invariant.
\end{theorem}

Knowing that the skew characteristic polynomial of graphs satisfies $4$-term relations
produces one more way to compute it for those graphs that are not intersection graphs.
If such a~graph can be expressed, modulo $4$-term relations, as a~linear combination of intersection graphs,
then the same linear combination of their skew characteristic polynomials
yields its skew characteristic polynomial.
For the $5$-wheel and the $3$-prism, this is shown in Figure~\ref{fR4} and Figure~\ref{fR5}.
However, any graph is known to be equivalent modulo $4$-term relations
to a~linear combination of intersection graphs only for graphs with up to~$8$ vertices, see~\cite{K21}.
For an arbitrary number of vertices, the question remains open.

\begin{figure}[ht]
\begin{center}
\setlength{\unitlength}{0.8pt}
\begin{picture}
(300,250)(10,120)\small

\thicklines

\put(55,351){\circle*{4}}
\put(55,375){\circle*{4}}
\put(34,362){\circle*{4}}
\put(76,362){\circle*{4}}

\put(40,335){\circle*{4}}
\put(70,335){\circle*{4}}
\put(55,350){\line(0,1){24}}
\put(55,351){\line(2,1){21}}

\put(55,351){\line(-2,1){21}}
\put(55,350){\line(1,-1){16}}
\put(55,350){\line(-1,-1){16}}
\put(40,335){\line(1,0){32}}
\put(55,375){\line(2,-1){21}}
\put(55,375){\line(-2,-1){21}}
\put(40,335){\line(-1,4){7}}
\put(70,335){\line(1,4){7}}

\put(90,351){$-$}
\put(28,332){$A$}
\put(72,332){$B$}

\put(135,351){\circle*{4}}
\put(135,375){\circle*{4}}
\put(114,362){\circle*{4}}
\put(156,362){\circle*{4}}
\put(120,335){\circle*{4}}
\put(150,335){\circle*{4}}
\put(135,350){\line(0,1){24}}
\put(135,351){\line(2,1){21}}
\put(135,351){\line(-2,1){21}}
\put(135,350){\line(1,-1){16}}
\put(135,350){\line(-1,-1){16}}
\put(135,375){\line(2,-1){21}}
\put(135,375){\line(-2,-1){21}}
\put(120,335){\line(-1,4){7}}
\put(150,335){\line(1,4){7}}

\put(170,351){$=$}
\put(108,332){$A$}
\put(152,332){$B$}

\put(215,351){\circle*{4}}
\put(215,375){\circle*{4}}
\put(194,362){\circle*{4}}
\put(236,362){\circle*{4}}
\put(200,335){\circle*{4}}
\put(230,335){\circle*{4}}
\put(215,350){\line(0,1){24}}
\put(215,351){\line(2,1){21}}
\put(215,351){\line(-2,1){21}}
\put(215,350){\line(1,-1){16}}
\put(200,336){\line(3,2){36}}
\put(200,335){\line(1,0){32}}
\put(215,375){\line(2,-1){21}}
\put(215,375){\line(-2,-1){21}}
\put(200,335){\line(-1,4){7}}
\put(230,335){\line(1,4){7}}

\put(250,351){$-$}
\put(188,332){$A$}
\put(232,332){$B$}

\put(295,351){\circle*{4}}
\put(295,375){\circle*{4}}
\put(274,362){\circle*{4}}
\put(316,362){\circle*{4}}
\put(280,335){\circle*{4}}
\put(310,335){\circle*{4}}
\put(295,350){\line(0,1){24}}
\put(295,351){\line(2,1){21}}
\put(295,351){\line(-2,1){21}}
\put(295,350){\line(1,-1){16}}
\put(280,336){\line(3,2){36}}
\put(295,375){\line(2,-1){21}}
\put(295,375){\line(-2,-1){21}}
\put(280,335){\line(-1,4){7}}
\put(310,335){\line(1,4){7}}

\put(268,332){$A$}
\put(312,332){$B$}

\put(100,315){${\scriptstyle u^6 +9u^4 +9u^2 +1}$}
\put(180,315){${\scriptstyle u^6 +10u^4 +9u^2 +1}$}
\put(270,315){${\scriptstyle u^6 +9u^4 +8u^2 +1}$}

\put(55,251){\circle*{4}}
\put(55,275){\circle*{4}}
\put(34,262){\circle*{4}}
\put(76,262){\circle*{4}}
\put(40,235){\circle*{4}}
\put(70,235){\circle*{4}}
\put(55,250){\line(0,1){24}}
\put(55,251){\line(2,1){21}}
\put(55,251){\line(-2,1){21}}
\put(55,250){\line(1,-1){16}}
\put(55,250){\line(-1,-1){16}}
\put(40,235){\line(1,0){32}}
\put(55,275){\line(2,-1){21}}
\put(55,275){\line(-2,-1){21}}
\put(40,235){\line(-1,4){7}}
\put(70,235){\line(1,4){7}}

\put(90,251){$-$}
\put(28,232){$A$}
\put(55,257){$B$}

\put(135,251){\circle*{4}}
\put(135,275){\circle*{4}}
\put(114,262){\circle*{4}}
\put(156,262){\circle*{4}}
\put(120,235){\circle*{4}}
\put(150,235){\circle*{4}}
\put(135,250){\line(0,1){24}}
\put(135,251){\line(2,1){21}}
\put(135,251){\line(-2,1){21}}
\put(135,250){\line(1,-1){16}}
\put(120,235){\line(1,0){32}}
\put(135,275){\line(2,-1){21}}
\put(135,275){\line(-2,-1){21}}
\put(120,235){\line(-1,4){7}}
\put(150,235){\line(1,4){7}}

\put(170,251){$=$}
\put(108,232){$A$}
\put(135,257){$B$}

\put(215,251){\circle*{4}}
\put(215,275){\circle*{4}}
\put(194,262){\circle*{4}}
\put(236,262){\circle*{4}}
\put(200,235){\circle*{4}}
\put(230,235){\circle*{4}}
\put(215,250){\line(0,1){24}}
\put(215,251){\line(2,1){21}}
\put(215,251){\line(-2,1){21}}
\put(215,250){\line(1,-1){16}}
\put(215,250){\line(-1,-1){16}}
\put(201,236){\line(1,3){13}}
\put(200,236){\line(3,2){36}}
\put(215,275){\line(2,-1){21}}
\put(215,275){\line(-2,-1){21}}
\put(230,235){\line(1,4){7}}

\put(250,251){$-$}
\put(188,232){$A$}
\put(215,257){$B$}

\put(295,251){\circle*{4}}
\put(295,275){\circle*{4}}
\put(274,262){\circle*{4}}
\put(316,262){\circle*{4}}
\put(280,235){\circle*{4}}
\put(310,235){\circle*{4}}
\put(295,250){\line(0,1){24}}
\put(295,251){\line(2,1){21}}
\put(295,251){\line(-2,1){21}}
\put(295,250){\line(1,-1){16}}
\put(280,236){\line(3,2){36}}
\put(281,236){\line(1,3){13}}
\put(295,275){\line(2,-1){21}}
\put(295,275){\line(-2,-1){21}}
\put(310,235){\line(1,4){7}}

\put(268,232){$A$}
\put(295,257){$B$}

\put(100,215){${\scriptstyle u^6 +9u^4 +11u^2}$}
\put(180,215){${\scriptstyle u^6 +10u^4 +8u^2 +1}$}
\put(270,215){${\scriptstyle u^6 +9u^4 +9u^2}$}

\put(55,151){\circle*{4}}
\put(55,175){\circle*{4}}
\put(34,162){\circle*{4}}
\put(76,162){\circle*{4}}
\put(40,135){\circle*{4}}
\put(70,135){\circle*{4}}
\put(55,150){\line(0,1){24}}
\put(55,151){\line(2,1){21}}
\put(55,151){\line(-2,1){21}}
\put(55,150){\line(1,-1){16}}
\put(55,150){\line(-1,-1){16}}
\put(40,135){\line(1,0){32}}
\put(55,175){\line(2,-1){21}}
\put(55,175){\line(-2,-1){21}}
\put(40,135){\line(-1,4){7}}
\put(70,135){\line(1,4){7}}

\put(90,151){$-$}
\put(25,132){$B$}
\put(55,157){$A$}

\put(135,151){\circle*{4}}
\put(135,175){\circle*{4}}
\put(114,162){\circle*{4}}
\put(156,162){\circle*{4}}
\put(120,135){\circle*{4}}
\put(150,135){\circle*{4}}
\put(135,150){\line(0,1){24}}
\put(135,151){\line(2,1){21}}
\put(135,151){\line(-2,1){21}}
\put(135,150){\line(1,-1){16}}
\put(120,135){\line(1,0){32}}
\put(135,175){\line(2,-1){21}}
\put(135,175){\line(-2,-1){21}}
\put(120,135){\line(-1,4){7}}
\put(150,135){\line(1,4){7}}

\put(170,151){$=$}
\put(105,132){$B$}
\put(135,157){$A$}

\put(215,151){\circle*{4}}
\put(215,175){\circle*{4}}
\put(194,162){\circle*{4}}
\put(236,162){\circle*{4}}
\put(200,135){\circle*{4}}
\put(230,135){\circle*{4}}
\put(215,150){\line(0,1){24}}
\put(215,151){\line(2,1){21}}
\put(215,150){\line(-1,-1){16}}
\put(200,135){\line(1,0){32}}
\put(215,175){\line(2,-1){21}}
\put(215,175){\line(-2,-1){21}}
\put(200,135){\line(-1,4){7}}
\put(230,135){\line(1,4){7}}

\put(250,151){$-$}
\put(185,132){$B$}
\put(215,157){$A$}

\put(295,151){\circle*{4}}
\put(295,175){\circle*{4}}
\put(274,162){\circle*{4}}
\put(316,162){\circle*{4}}
\put(280,135){\circle*{4}}
\put(310,135){\circle*{4}}
\put(295,150){\line(0,1){24}}
\put(295,151){\line(2,1){21}}
\put(280,135){\line(1,0){32}}
\put(295,175){\line(2,-1){21}}
\put(295,175){\line(-2,-1){21}}
\put(280,135){\line(-1,4){7}}
\put(310,135){\line(1,4){7}}

\put(265,132){$B$}
\put(295,157){$A$}

\put(100,115){${\scriptstyle u^6 +9u^4 +11u^2}$}
\put(180,115){${\scriptstyle u^6 +8u^4 +10u^2 +1}$}
\put(270,115){${\scriptstyle u^6 +7u^4 +11u^2}$}
\end{picture}
\end{center}
\caption{$4$-term relations for the $5$-wheel expressed in terms
of intersection graphs; the value of the skew characteristic polynomial is indicated} \label{fR4}
\end{figure}

\begin{figure}[ht!]
\begin{center}
\setlength{\unitlength}{0.8pt}
\begin{picture}
(300,180)(10,120)\small

\thicklines

\put(55,270){\circle*{4}}
\put(55,240){\circle*{4}}
\put(35,280){\circle*{4}}
\put(75,280){\circle*{4}}
\put(35,230){\circle*{4}}
\put(75,230){\circle*{4}}
\put(35,230){\line(0,1){50}}
\put(35,230){\line(1,0){40}}
\put(35,280){\line(1,0){40}}
\put(75,230){\line(0,1){50}}
\put(55,240){\line(0,1){30}}
\put(35,230){\line(2,1){20}}
\put(35,280){\line(2,-1){20}}
\put(75,280){\line(-2,-1){20}}
\put(75,230){\line(-2,1){20}}

\put(90,251){$-$}
\put(43,240){$A$}
\put(58,260){$B$}

\put(135,270){\circle*{4}}
\put(135,240){\circle*{4}}
\put(115,280){\circle*{4}}
\put(155,280){\circle*{4}}
\put(115,230){\circle*{4}}
\put(155,230){\circle*{4}}
\put(115,230){\line(0,1){50}}
\put(115,230){\line(1,0){40}}
\put(115,280){\line(1,0){40}}
\put(155,230){\line(0,1){50}}
\put(115,230){\line(2,1){20}}
\put(115,280){\line(2,-1){20}}
\put(155,280){\line(-2,-1){20}}
\put(155,230){\line(-2,1){20}}

\put(170,251){$=$}
\put(123,240){$A$}
\put(138,260){$B$}

\put(215,270){\circle*{4}}
\put(215,240){\circle*{4}}
\put(195,280){\circle*{4}}
\put(235,280){\circle*{4}}
\put(195,230){\circle*{4}}
\put(235,230){\circle*{4}}
\put(195,230){\line(0,1){50}}
\put(195,230){\line(1,0){40}}
\put(195,280){\line(1,0){40}}
\put(235,230){\line(0,1){50}}
\put(215,240){\line(0,1){30}}
\put(195,230){\line(2,1){20}}
\put(195,280){\line(2,-1){20}}
\put(235,280){\line(-2,-1){20}}
\put(235,230){\line(-2,1){20}}
\put(195,280){\line(1,-2){20}}
\put(235,280){\line(-1,-2){20}}

\put(250,251){$-$}
\put(200,240){$A$}
\put(215,260){$B$}

\put(295,270){\circle*{4}}
\put(295,240){\circle*{4}}
\put(275,280){\circle*{4}}
\put(315,280){\circle*{4}}
\put(275,230){\circle*{4}}
\put(315,230){\circle*{4}}
\put(275,230){\line(0,1){50}}
\put(275,230){\line(1,0){40}}
\put(275,280){\line(1,0){40}}
\put(315,230){\line(0,1){50}}
\put(275,230){\line(2,1){20}}
\put(275,280){\line(2,-1){20}}
\put(315,280){\line(-2,-1){20}}
\put(315,230){\line(-2,1){20}}
\put(275,280){\line(1,-2){20}}
\put(315,280){\line(-1,-2){20}}

\put(280,240){$A$}
\put(295,260){$B$}

\put(110,215){${\scriptstyle u^6 +8u^4 +12u^2}$}
\put(190,215){${\scriptstyle u^6 +11u^4 +8u^2}$}
\put(270,215){${\scriptstyle u^6 +10u^4 +8u^2}$}

\put(55,170){\circle*{4}}
\put(55,140){\circle*{4}}
\put(35,180){\circle*{4}}
\put(75,180){\circle*{4}}
\put(35,130){\circle*{4}}
\put(75,130){\circle*{4}}
\put(35,130){\line(0,1){50}}
\put(35,130){\line(1,0){40}}
\put(35,180){\line(1,0){40}}
\put(75,130){\line(0,1){50}}
\put(55,140){\line(0,1){30}}
\put(35,130){\line(2,1){20}}
\put(35,180){\line(2,-1){20}}
\put(75,180){\line(-2,-1){20}}
\put(75,130){\line(-2,1){20}}

\put(90,151){$-$}
\put(43,140){$A$}
\put(78,125){$B$}

\put(135,170){\circle*{4}}
\put(135,140){\circle*{4}}
\put(115,180){\circle*{4}}
\put(155,180){\circle*{4}}
\put(115,130){\circle*{4}}
\put(155,130){\circle*{4}}
\put(115,130){\line(0,1){50}}
\put(115,130){\line(1,0){40}}
\put(115,180){\line(1,0){40}}
\put(155,130){\line(0,1){50}}
\put(135,140){\line(0,1){30}}
\put(115,130){\line(2,1){20}}
\put(115,180){\line(2,-1){20}}
\put(155,180){\line(-2,-1){20}}

\put(170,151){$=$}
\put(123,140){$A$}
\put(158,125){$B$}

\put(215,170){\circle*{4}}
\put(215,140){\circle*{4}}
\put(195,180){\circle*{4}}
\put(235,180){\circle*{4}}
\put(195,130){\circle*{4}}
\put(235,130){\circle*{4}}
\put(195,130){\line(0,1){50}}
\put(195,130){\line(1,0){40}}
\put(195,180){\line(1,0){40}}
\put(235,130){\line(0,1){50}}
\put(215,140){\line(0,1){30}}
\put(195,180){\line(2,-1){20}}
\put(235,180){\line(-2,-1){20}}
\put(235,130){\line(-2,1){20}}
\put(235,180){\line(-1,-2){20}}
\put(235,180){\line(-1,-2){20}}

\put(250,151){$-$}
\put(203,140){$A$}
\put(238,125){$B$}

\put(295,170){\circle*{4}}
\put(295,140){\circle*{4}}
\put(275,180){\circle*{4}}
\put(315,180){\circle*{4}}
\put(275,130){\circle*{4}}
\put(315,130){\circle*{4}}
\put(275,130){\line(0,1){50}}
\put(275,130){\line(1,0){40}}
\put(275,180){\line(1,0){40}}
\put(315,130){\line(0,1){50}}
\put(295,140){\line(0,1){30}}
\put(275,180){\line(2,-1){20}}
\put(315,180){\line(-2,-1){20}}
\put(315,180){\line(-1,-2){20}}

\put(283,140){$A$}
\put(318,125){$B$}

\put(105,115){${\scriptstyle u^6 +8u^4 +10u^2 +1}$}
\put(190,115){${\scriptstyle u^6 +9u^4 +11u^2}$}
\put(265,115){${\scriptstyle u^6 +8u^4 +9u^2 +1}$}

\end{picture}
\end{center}

		\caption{$4$-term relations for the $5$-wheel expressing them in terms
of intersection graphs; the value of the skew characteristic polynomial is indicated}
		\label{fR5}
\end{figure}

\begin{theorem}
The refined skew characteristic polynomial of simple graphs satisfies the $4$-term relations for graphs.
\end{theorem}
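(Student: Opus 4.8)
The plan is to reduce the four-term relation for $\overline{Q}$, subset by subset, to a single algebraic fact: that the corank over the field of two elements of the adjacency matrix is invariant under the second Vassiliev move. Write $\overline{Q}_G(u,v)=\sum_{U\subseteq V(G)}u^{|V(G)|-|U|}\,c(G(U))$, where $c(H):=v^{{\rm corank}\,A_H}$ is the corank monomial of the induced subgraph. Since the four graphs $G$, $G'_{ab}$, $\widetilde{G}_{ab}$, $\widetilde{G}'_{ab}$ appearing in the relation all share the vertex set $V=V(G)$, and the weight $u^{|V|-|U|}$ of a subset $U$ is the same in all four polynomials, the alternating combination collapses to
\[
\overline{Q}_G-\overline{Q}_{G'_{ab}}-\overline{Q}_{\widetilde{G}_{ab}}+\overline{Q}_{\widetilde{G}'_{ab}}=\sum_{U\subseteq V}u^{|V|-|U|}\,\Delta(U),
\]
with $\Delta(U)=c(G(U))-c(G'_{ab}(U))-c(\widetilde{G}_{ab}(U))+c(\widetilde{G}'_{ab}(U))$. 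It therefore suffices to show $\Delta(U)=0$ for every $U$.

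First I would dispose of the subsets $U$ that do not contain both $a$ and $b$. Both Vassiliev moves alter only edges incident to the vertex $a$ (the first toggles the edge $ab$, the second toggles edges $ac$ for $c$ adjacent to $b$), so if $a\notin U$ all four induced subgraphs on $U$ coincide and $\Delta(U)=0$. If $a\in U$ but $b\notin U$, the first move is invisible on $U$ because the edge $ab$ is not induced, whence $G'_{ab}(U)=G(U)$ and $\widetilde{G}'_{ab}(U)=\widetilde{G}_{ab}(U)$; the two surviving pairs cancel and again $\Delta(U)=0$. When both $a,b\in U$ the induced subgraphs are exactly the corresponding Vassiliev moves applied to $H:=G(U)$ --- here one uses that for $c\in U$ the adjacency to $b\in U$ is the same in $G$ and in $G(U)$ --- so $\Delta(U)$ becomes the four-term alternating sum of $c$ for the graph $H$ with the pair $a,b$.

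The heart of the argument is then a two-term relation for the corank, generalizing the relation $\nu(G)=\nu(\widetilde{G}_{ab})$ already recorded for the nondegeneracy. Over the field of two elements a symmetric matrix is antisymmetric, and by the computation of the matrix section the second Vassiliev move acts on the adjacency matrix as the congruence $A_H\mapsto B^tA_HB$ by the invertible matrix $B$ of that section. A congruence by an invertible matrix preserves rank, hence corank, so $c(\widetilde{H}_{ab})=c(H)$ for every graph $H$. Applying this to $H$ and, since the two moves commute and the first move does not change the adjacencies of any $c\neq a$ to $b$, also to $H'_{ab}$, I obtain $c(\widetilde{H}_{ab})=c(H)$ and $c(\widetilde{H}'_{ab})=c(H'_{ab})$. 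Substituting these into the third case gives $\Delta(U)=\bigl(c(H)-c(\widetilde{H}_{ab})\bigr)-\bigl(c(H'_{ab})-c(\widetilde{H}'_{ab})\bigr)=0$.

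I expect the genuinely content-bearing step to be the congruence invariance of the corank; but this is essentially immediate once one recognizes the second Vassiliev move as $A\mapsto B^tAB$ over the field of two elements, which the matrix section has already set up. The main place where care is needed is the bookkeeping of the subset decomposition --- in particular verifying that taking induced subgraphs commutes with the Vassiliev moves when $a,b\in U$ --- and this is routine. Putting the three cases together shows $\Delta(U)=0$ for all $U$, so the refined skew characteristic polynomial satisfies the four-term relation.
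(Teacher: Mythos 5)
Your proof is correct and follows essentially the same route as the paper: the paper's (one-sentence) proof also rests on the fact that the corank of the adjacency matrix over the field of two elements satisfies the $2$-term relation, i.e.\ is preserved by the second Vassiliev move, from which the $4$-term relation for $\overline{Q}$ follows by the same subset-by-subset cancellation used for $Q$. You merely make explicit what the paper leaves implicit --- the decomposition over induced subgraphs, the three cases according to whether $a,b\in U$, and the identification of the second Vassiliev move with the congruence $A\mapsto B^tAB$ --- all of which is sound.
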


The proof of the theorem is similar to that for skew characteristic polynomial and is
based on the fact that the corank of the adjacency matrix of a~graph is preserved by the second
Vassiliev move, see equation~\eqref{eSVmm} below, that is, satisfies the $2$-term relations.

\subsection{Characteristic polynomial of antisymmetric matrices
and \texorpdfstring{$4$}{4}-term relations}\label{ssasm}

Symmetric matrices over the field of two elements, considered up to simultaneous permutations of rows and columns, are in one-to-one correspondence with isomorphism classes of simple graphs. Following the pattern for adjacency matrices of graphs in the 4-term relation,
we are able to define the 4-term relation for arbitrary square matrices. It happens that, for antisymmetric matrices,
the characteristic polynomial satisfies these $4$-term relations. We hope that this property will find applications outside
the theory of finite type knot invariants.

Let $M_n$ be the set of $n \times n$ matrices over the base field, considered up to simultaneous permutations of rows and columns.
Let $f$ be a~function on $M_n$. We say that \textit{$f$ satisfies the 4-term relations} if for any matrix $A=(a_{ij})\in M_n$ we have

\[
f(A)-f(A_{12}')=f(\widetilde{A}_{12})-f(\widetilde{A}'_{12}),
\]
where
\begin{itemize}
    \item the matrix $A'_{12}$ is the matrix obtained from $A$ by replacing the two entries
$a_{12}$ and $a_{21}$ with zeroes;
\item the matrix $\widetilde{A}_{12}$ is
\begin{equation}\label{eSVmm}
\widetilde{A}_{12}=B^t AB,
\end{equation}
where $B$ is the block square matrix consisting of a~$2\times2$-block
\[
\left(
\begin{array}
{cc}
1&1\\0&1
\end{array}
\right)
\]
in the first two rows and columns and a~complementary identity block;
\item the matrix $\widetilde{A}'_{12}$ is the result of applying to~$A$ the operations, tilde and prime, in the elements~$1$ and~$2$, and is independent of the order of the two.
\end{itemize}

 We prove that the characteristic polynomial satisfies this 4-term relation for antisymmetric matrices of order $n$.
 In particular, this is true for adjacency matrices of oriented simple graphs. It is also true
 for adjacency matrices of non-oriented simple graphs, if we consider them over the field of
 two elements, since in this case symmetric matrices are also antisymmetric.
 It is easy to check that for symmetric matrices, for example, the corresponding assertion is wrong.

\begin{theorem}
    The characteristic polynomial satisfies the $4$-term relations for the space of antisymmetric $n\times n$-matrices.
\end{theorem}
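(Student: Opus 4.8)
The plan is to reduce the matrix $4$-term identity to two scalar identities about the coefficients of the characteristic polynomial, obtained by viewing that polynomial as a function of the two distinguished entries $a_{12},a_{21}$.

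First I would fix notation adapted to the first two indices. Since $A$ is antisymmetric, write it in block form
\[
A=\begin{pmatrix} 0 & a & r_1\\ -a & 0 & r_2\\ -r_1^{t} & -r_2^{t} & C\end{pmatrix},
\]
where $a=a_{12}$, the $r_i$ are the length-$(n-2)$ row vectors forming the tails of the first two rows (antisymmetry forcing the first two columns to end in $-r_1^{t},-r_2^{t}$), and $C$ is the antisymmetric $(n-2)\times(n-2)$ block. The congruence $\widetilde A_{12}=B^{t}AB$ is precisely ``add the first column to the second and the first row to the second''; a direct check shows it fixes the $2\times2$ corner $\left(\begin{smallmatrix}0&a\\-a&0\end{smallmatrix}\right)$ and the block $C$, leaves $r_1$ unchanged, and replaces $r_2$ by $r_1+r_2$ (hence $r_2^{t}$ by $r_1^{t}+r_2^{t}$). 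The prime operation simply sets the corner entries $a_{12},a_{21}$ to $0$.

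The key device is to regard $F(x,y):=\det(uI-A)$ as a function of the two entries $x=a_{12}$ and $y=a_{21}$ alone, all other entries frozen. Multilinearity of the determinant makes it affine in each, so $F(x,y)=c_0+c_1x+c_2y+c_{12}xy$. Because $A$ has corner $(x,y)=(a,-a)$ while $A'_{12}$ has $(0,0)$, the left-hand side of the relation equals $F(a,-a)-F(0,0)=(c_1-c_2)a-c_{12}a^{2}$, and the right-hand side equals the same expression written in the tilde-coefficients (note $\widetilde A_{12}$ also has corner $(a,-a)$). Thus the theorem collapses to the two identities $c_{12}=\widetilde c_{12}$ and $c_1-c_2=\widetilde c_1-\widetilde c_2$. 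The first is immediate: $c_{12}$ is the complementary signed minor on $\{3,\dots,n\}$, so it depends only on $N:=uI-C$, which the tilde operation does not touch. For the second I would use the bordered-determinant formula $\det\left(\begin{smallmatrix} 0 & w^{t}\\ v & N\end{smallmatrix}\right)=-\,w^{t}\,{\rm adj}(N)\,v$; expanding the relevant cofactors yields $c_1=r_2\,{\rm adj}(N)\,r_1^{t}$ and $c_2=r_1\,{\rm adj}(N)\,r_2^{t}$, with the \emph{same} overall sign. Writing $\Phi(p,q)=p\,{\rm adj}(N)\,q^{t}$ and substituting $r_2\mapsto r_1+r_2$, bilinearity gives
\[
\widetilde c_1-\widetilde c_2=\Phi(r_1+r_2,r_1)-\Phi(r_1,r_1+r_2)=\Phi(r_2,r_1)-\Phi(r_1,r_2)=c_1-c_2,
\]
the diagonal term $\Phi(r_1,r_1)$ cancelling exactly because $c_1$ and $c_2$ carry the same sign.

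The only delicate point, and the place I expect real care to be needed, is the sign bookkeeping just invoked: one must confirm that $c_1$ and $c_2$ genuinely occur with equal signs, and this is precisely where antisymmetry is used. In $uI-A$ the two corner entries are $-x,-y$ and the two lower border blocks are $-r_1^{t},-r_2^{t}$, and these minus signs enter $c_1$ and $c_2$ symmetrically. For a \emph{symmetric} matrix the corner would instead be $(a,a)$, so the relevant combination is $c_1+c_2$ paired with an opposite sign pattern, and the $\Phi(r_1,r_1)$ term no longer cancels — which is exactly why the characteristic polynomial fails the $4$-term relation in the symmetric case, as remarked above. Once the block picture and the reduction to $c_{12}=\widetilde c_{12}$, $c_1-c_2=\widetilde c_1-\widetilde c_2$ are in place, the remainder is routine, with all the attention going into the signs.
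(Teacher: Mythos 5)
Your proof is correct, and it takes a genuinely different route from the paper's. The paper argues by brute force: it expands $\chi_A-\chi_{A'}$ along the first row and first column, tracks the pairwise cancellation of all cofactor terms, arrives at a reduced expression in which every surviving summand carries the factor $-a_{12}$, then repeats the expansion for $\chi_{\widetilde{A}}-\chi_{\widetilde{A}'}$ and asserts that ``a straightforward calculation'' rearranges the result into the same expression. You replace this bookkeeping with structure: since each matrix entry occurs at most once in every permutation term, $F(x,y)=\det\bigl(uI-A(x,y)\bigr)$ is affine in each of the two distinguished entries, $F=c_0+c_1x+c_2y+c_{12}xy$, and because both $A$ and $\widetilde{A}_{12}$ have corner $(a,-a)$ while the primed matrices have corner $(0,0)$, the whole relation collapses to the two coefficient identities $c_{12}=\widetilde{c}_{12}$ and $c_1-c_2=\widetilde{c}_1-\widetilde{c}_2$. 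The first is immediate ($c_{12}=-\det N$ depends only on the untouched block $N=uI-C$), and for the second the bordered-determinant formula gives $c_1=\Phi(r_2,r_1)$, $c_2=\Phi(r_1,r_2)$ for the single bilinear form $\Phi(p,q)=p\,\mathrm{adj}(N)\,q^{t}$, with \emph{equal} signs exactly because of antisymmetry, so the substitution $r_2\mapsto r_1+r_2$ preserves $\Phi(r_2,r_1)-\Phi(r_1,r_2)$ by cancellation of the diagonal term $\Phi(r_1,r_1)$. I checked the delicate points — the action of $B^{t}(\cdot)B$ on the block decomposition, and the signs $c_1=r_2\,\mathrm{adj}(N)\,r_1^{t}$, $c_2=r_1\,\mathrm{adj}(N)\,r_2^{t}$, $c_{12}=-\det N$ — and they are right. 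Your route is shorter, isolates the precise point where antisymmetry is used, and proves as a by-product the paper's unproved remark that the analogous statement fails for symmetric matrices (there the relevant combination becomes $c_1+c_2$ and the uncancelled term $2\Phi(r_1,r_1)$ obstructs the relation, while also explaining why symmetric matrices over the field of two elements still work). What the paper's route buys in exchange is only self-containedness: nothing beyond cofactor expansion is used, at the cost of heavy notation and a final rearrangement left to the reader.
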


\begin{proof}
Let $A$ be an antisymmetric matrix of order $n$, and
let $\chi_A$ be its characteristic polynomial, i.e. $\chi_A(u)=\lvert u I-A \rvert$.
Suppose {\small
        \setlength{\arraycolsep}{0.05cm}
\[
A=
    \begin{pmatrix}
        0 & -a_{12} & -a_{13} & \ldots & -a_{1n}\\
        a_{12} & 0 & -a_{23} & \ldots & -a_{2n}\\
        \vdots & \vdots & \ddots & & \vdots\\
        a_{1n} & a_{2n} & \ldots & \ldots & 0
    \end{pmatrix}
\]
}

    Then, {\small
        \setlength{\arraycolsep}{0.05cm}
\[
        A'=
    \begin{pmatrix}
        0 & 0 & -a_{13} & \ldots & -a_{1n}\\
        0 & 0 & -a_{23} & \ldots & -a_{2n}\\
        \vdots & \vdots & \ddots & & \vdots\\
        a_{1n} & a_{2n} & \ldots & \ldots & 0
    \end{pmatrix}
,
    \quad \Tilde{A}=
        \begin{pmatrix}
        0 & -a_{12} & -a_{13}-a_{23} & \ldots & -a_{1n}-a_{2n}\\
        a_{12} & 0 & -a_{23} & \ldots & -a_{2n}\\
        a_{13}+a_{23} & a_{23} & 0 & \ldots & -a_{2n}\\
        \vdots & \vdots & \ddots & & \vdots\\
        a_{1n}+a_{2n} & a_{2n} & \ldots & \ldots & 0
    \end{pmatrix}
, \text{ and}
   \]

   \[
   \Tilde{A}'=
        \begin{pmatrix}
        0 & 0 & -a_{13}-a_{23} & \ldots & -a_{1n}-a_{2n}\\
        0 & 0 & -a_{23} & \ldots & -a_{2n}\\
        a_{13}+a_{23} & a_{23} & 0 & \ldots & -a_{2n}\\
        \vdots & \vdots & \ddots & & \vdots\\
        a_{1n}+a_{2n} & a_{2n} & \ldots & \ldots & 0
    \end{pmatrix}
.
   \]
}

   We have
   {\small
        \setlength{\arraycolsep}{0.05cm}
\[
        \chi_A-\chi_{A'}=
        \begin{vmatrix}
        u & a_{12} & a_{13} & \ldots & a_{1n}\\
        -a_{12} & u & a_{23} & \ldots & a_{2n}\\
        \vdots & \vdots & \ddots & & \vdots\\
        -a_{1n} & -a_{2n} & \ldots & \ldots & u
    \end{vmatrix}
    -
    \begin{vmatrix}
        u & 0 & a_{13} & \ldots & a_{1n}\\
        0 & u & a_{23} & \ldots & a_{2n}\\
        \vdots & \vdots & \ddots & & \vdots\\
        -a_{1n} & -a_{2n} & \ldots & \ldots & u
    \end{vmatrix}
    \]
}

    Expanding $\chi_A$ along the first row, and $\chi_{A'}$ along the first column, we get
           {\footnotesize
        \setlength{\arraycolsep}{0.03cm}
        \allowdisplaybreaks
        \begin{gather*}
        \chi_A-\chi_{A'}=
                 -a_{12}
\begin{vmatrix}
        -a_{12} & a_{23} & a_{24} & \ldots & a_{2n}\\
        -a_{13} & u & a_{34} & \ldots & a_{3n}\\
        \vdots & -a_{34} & u & \ldots & \vdots\\
        \vdots & \vdots & &\ddots &\vdots\\
        -a_{1n} & -a_{3n} & -a_{4n} & \ldots & u
        \end{vmatrix}
+a_{13}\left(
\begin{vmatrix}
        -a_{12} & u & a_{24} & \ldots & a_{2n}\\
        -a_{13} & -a_{23} & a_{34} & \ldots & a_{3n}\\
        \vdots & \vdots & u & \ldots & \vdots\\
        \vdots & \vdots & &\ddots &\vdots\\
        -a_{1n} & -a_{2n} & -a_{4n} & \ldots & u
        \end{vmatrix}
+
        \begin{vmatrix}
        0 & a_{13} & \ldots & \ldots & a_{1n}\\
        u & a_{23} & \ldots & \ldots & a_{2n}\\
        -a_{24} & -a_{34} & & \ldots & a_{4n}\\
        \vdots & \vdots & &\ddots &\vdots\\
        -a_{2n} & -a_{3n} & -a_{4n} & \ldots & u
        \end{vmatrix}
\right) \\ +\dots + (-1)^{k-1}a_{1k}\left(
\begin{vmatrix}
        -a_{12} & u & a_{23} & \ldots & a_{2(k-1)} & a_{2(k+1)} & \ldots & a_{2n}\\
        -a_{13} & -a_{23} & u & & & & \ldots & a_{3n}\\
        \vdots & \vdots & \ldots& & & & \ldots & \vdots\\
        \vdots & \vdots & \ldots & & & &\ddots &\vdots\\
        -a_{1n} & -a_{2n} & \ldots & & -a_{(k-1)n} & -a_{(k+1)n} & \ldots &u
        \end{vmatrix}
+
\begin{vmatrix}
        0 & a_{13} & \ldots & \ldots & a_{1n}\\
        u & a_{23} & \ldots & \ldots & a_{2n}\\
        -a_{23} & u & \ldots & & a_{3n}\\
        -a_{2(k-1)} & \vdots & & \ldots & a_{(k-1)n}\\
        -a_{2(k+1)} & \vdots & \ddots & & a_{(k+1)n}\\
        \vdots & \vdots & &\ddots &\vdots\\
        -a_{2n} & -a_{3n} & -a_{4n} & \ldots & u
        \end{vmatrix}
\right) \\ +\ldots + (-1)^{n-1}a_n\left(
       \begin{vmatrix}
        -a_{12} & u & a_{23} & \ldots & a_{2(n-1)}\\
        -a_{13} & -a_{23} & u & \ldots & a_{3(n-1)}\\
        \vdots & \vdots & u & \ldots & \vdots\\
        \vdots & \vdots & &\ddots &u\\
       -a_{1n} & -a_{2n} & -a_{3n} & \ldots & -a_{(n-1)n}
        \end{vmatrix}
\right.
        + \left.
\begin{vmatrix}
        0 & a_{13} & \ldots & \ldots & a_{1n}\\
        u & a_{23} & \ldots & \ldots & a_{2n}\\
        -a_{23} & u & & \ldots & a_{3n}\\
        \vdots & \vdots & &\ddots &\vdots\\
        -a_{2(n-1)} & -a_{3(n-1)} & \ldots & u & a_{(n-1)n}
        \end{vmatrix}
\right).
        \end{gather*}
}
        Now, we make some observations:
    \begin{enumerate}
        \item For $k\geq 3$, in each summand, there is a~difference term of two $(n-1)\times (n-1)$ minors of $a_{1k}$, coming from $\chi_A$ and $\chi_{A'}$ respectively. We expand one with respect to the first row, and the other with respect to the first column.
        \item It follows that, in each summand, the terms corresponding to the expansion wrt $a_{1k}$ cancel out.
        \item Also, for $k,l \geq 3$, $k\neq l$,
\[
(-1)^{n-1}(a_{1k})(a_{1l})\Big(|A_1|-|A_2| \Big) = -(-1)^{n-1}(a_{1l})(a_{1k})\Big(|A_2|-|A_1| \Big),
\]
        where $A_1$, $A_2$ are the minors of $a_{1l}$, in the expansion inside the summand corresponding to $a_{1k}$. Note that they are also the minors of $a_{1k}$, in the expansion inside the summand corresponding to $a_{1l}$.
        Therefore, they also cancel out.

    \end{enumerate}
    Hence, we are left with the following expression,
    \begin{align} \label{eq:1}
         \chi_A-\chi_{A'} 
         ={ }&{ } -a_{12}
        \begin{vmatrix}
            -a_{12} & a_{23} & a_{24} & \ldots & a_{2n}\\
            -a_{13} & u & a_{34} & \ldots & a_{3n}\\
            \vdots & -a_{34} & u & \ldots & \vdots\\
            \vdots & \vdots & &\ddots &\vdots\\
            -a_{1n} & -a_{3n} & -a_{4n} & \ldots & u
        \end{vmatrix} + (a_{13})(-a_{12})
        \begin{vmatrix}
            -a_{23} & a_{34} & \ldots & a_{3n}\\
            \vdots & u & \ldots & \vdots\\
            \vdots & & \ddots & \vdots\\
            -a_{2n} & \ldots & \ldots & u
        \end{vmatrix} \\
        &{\quad}+ \dotsb
        + (-1)^{k-1}(a_{1k})(-a_{12})
        \begin{vmatrix}
            -a_{23} & u & \ldots & a_{3n}\\
            \vdots & & \ddots & \vdots\\
            -a_{2n} & & \ldots & u
        \end{vmatrix} \nonumber\\
        &{\quad}+ \dotsb
        +(-1)^{n-1}(a_{1n})(-a_{12})
        \begin{vmatrix}
            -a_{23} & u & \ldots & a_{3(n-1)}\\
            \vdots & & \ddots & \vdots\\
            \vdots & & \ldots & u\\
            -a_{2n} & -a_{3n} & \ldots & -a_{(n-1)n}
        \end{vmatrix}, \nonumber
    \end{align}
where the minor in the first summand, is of order $(n-1)\times (n-1)$, and in the rest of the summands are of order $(n-2)\times (n-2)$.

        Now, consider an analogous procedure for $\chi_{\Tilde{A}}-\chi_{\Tilde{A}'}$:
           {\small
        \setlength{\arraycolsep}{0.05cm}
\begin{align*}
\chi_{\Tilde{A}}-\chi_{\Tilde{A}'} & =
        -a_{12}
\begin{vmatrix}
        -a_{12} & a_{23} & a_{24} & \ldots & a_{2n}\\
        -(a_{13}+a_{23}) & u & a_{34} & \ldots & a_{3n}\\
        \vdots & -a_{34} & u & \ldots & \vdots\\
        \vdots & \vdots & &\ddots &\vdots\\
        -(a_{1n}+a_{2n}) & -a_{3n} & -a_{4n} & \ldots & u
        \end{vmatrix}
        + (a_{13}+a_{23})(-a_{12})
\begin{vmatrix}
        -a_{23} & a_{34} & \ldots & a_{3n}\\
        \vdots & u & \ldots & \vdots\\
        \vdots & & \ddots & \vdots\\
        -a_{2n} & \ldots & \ldots & u
        \end{vmatrix}
+ \dots\\
        & \qquad \qquad\qquad \qquad\dots + (-1)^{k-1}(a_{1k}+a_{2k})(-a_{12})
\begin{vmatrix}
        -a_{23} & u & \ldots & a_{3n}\\
        \vdots & & \ddots & \vdots\\
                -a_{2n} & & \ldots & u
        \end{vmatrix}
+ \dots\\
        & \qquad \qquad\qquad\qquad \dots +(-1)^{n-1}(a_{1n}+a_{2n})(-a_{12})
\begin{vmatrix}
        -a_{23} & u & \ldots & a_{3(n-1)}\\
        \vdots & & \ddots & \vdots\\
        \vdots & & \ldots & u\\
        -a_{2n} & -a_{3n} & \ldots & -a_{(n-1)n}
        \end{vmatrix}
.
        \end{align*}
}
        Now, in the above expression of $\chi_{\Tilde{A}}(u)-\chi_{\Tilde{A}'}(u)$, expand the determinant in the first summand along the first column. Then a~straightforward calculation shows that the summands can be rearranged to form an expression that is exactly equal to the right-hand side of equation~(\ref{eq:1}).
\end{proof}

As an immediate consequence of this theorem, we obtain

\begin{corollary}
   The skew characteristic polynomial $Q_C$ of chord diagrams satisfies the $4$-term relations and is hence a~weight system.
\end{corollary}

\section{Skew characteristic polynomial for delta-matroids and embedded graphs}\label{s3}

In this section, we extend the skew characteristic polynomial to
a two-variable polynomial invariant of embedded graphs
and delta-matroids. Chord diagrams can be considered as embedded graphs with a~single
vertex (the vertex corresponds to the underlying circle of the chord diagram,
while the chords are the edges). In contrast to chord diagrams,
embedded graphs with more than one vertex do not span a~Hopf algebra in a~natural way,
even when considered modulo $4$-term relations.
Fortunately, however, a~construction due to A.~Bouchet allows one to associate
to each graph and each embedded graph its delta-matroid. In turn,
delta-matroids span a~Hopf algebra~\cite{Lando2016DeltamatroidsAV}, and one can define the skew characteristic polynomial
for delta-matroids following the same pattern as above.
The construction makes use of the fact that we know how to extend non-degeneracy to delta-matroids.

Our approach to extending the skew characteristic polynomial
to embedded graphs and delta-matroids is similar in nature to that of~\cite{NZ},
where an extension of Stanley's symmetrized chromatic polynomial has been constructed.
M.~Nenasheva and V.~Zhukov are basing more specifically on combinatorial Hopf algebra structures,
and constructed a~character for the extended invariant, which is a~graded Hopf algebra
homomorphism. For the skew characteristic polynomial, this is not true, and we use
only the knowledge of the value of the extended invariant on primitives.
Extensions of graph invariants to delta-matroids in their relationship
with knot and link invariants are studied also in~\cite{Dunaykin2019TransitionPA,K20}.

The $4$-term relations also admit an extension to embedded graphs and binary delta-matroids.
Since non-degeneracy respects these relations, the extended skew characteristic polynomial
also satisfies them, whence determining an invariant of multi-component links.

Below, we recall necessary knowledge about embedded graphs and delta-matroids.
We follow mainly~\cite{Chun2018OnTI}.

\subsection{Embedded graphs}

\begin{definition}
    A \emph{ribbon graph} (or a~\emph{cellularly embedded graph}) $\Gamma=(V,E)$, is a~surface with boundary
     such that it can be represented as the union of two sets of disks, namely,
     a~set $V$ of vertices, and a~set $E$ of edges such that:
\begin{itemize}
    \item The vertices and edges intersect in disjoint line segments;
    \item Each such line segment lies on the boundary of exactly one vertex, and exactly one edge;
   \item Every edge contains exactly two such line segments.
\end{itemize}
\end{definition}
This surface can be orientable or non-orientable.
Below, we restrict ourselves to the case of oriented embedded graphs.

We can form ribbon graphs (with one vertex) from chord diagrams by attaching a~disk to the outer circle and thickening the chords to form ribbons and hence inducing a~natural cyclic order to the ribbons.
 If we consider arbitrary embedded graphs, allowing for more than one vertex,
 we are led to obtain a~generalization of the concept of weight systems for knots, expanding it to a~notion of weight systems for links.
 The number of components of a~link then correspond to the number of vertices in the embedded graph.

For an arbitrary embedded graph $\Gamma=(V,E)$, it is not conducive to assign an intersection graph to it, so we focus on the associated so-called `delta-matroid' of the embedded graph $\Gamma$. This delta-matroid is binary.
First, some preliminaries on delta-matroids are discussed. A detailed study can be found in~\cite{Chun2018OnTI}.

\subsection{Delta-matroids of graphs and embedded graphs}

\begin{definition}[Set System]
    A \emph{set system} is a~pair $D=(E;\Phi)$, where $E$, called the \emph{ground set}, is a~finite set,
    and $\Phi \subset 2^E$, called the set of \emph{feasible sets}, is a~subset of the family of subsets of $E$.
\end{definition}

A set system $D=(E;\Phi)$ is `proper' if $\Phi$ is nonempty.

\begin{definition}[Delta-Matroid]
    A proper set system $D=(E;\Phi)$ is called a~\emph{delta-matroid} if it satisfies the following `Symmetric Exchange Axiom' (SEA):
    \[
\forall \text{ X,Y} \in \Phi, \forall a\in X\triangle Y, \exists b\in X\triangle Y \text{ such that } X\triangle \{a,b\}\in \Phi.
\]
\end{definition}

\begin{example}
      Let $E=\{a,b,c\}$, and $\Phi=\{ \{a\},\{b\},\{a,b\},\{a,b,c\}\}$; then $D=(E;\Phi)$ is a~delta-matroid.
\end{example}
A delta-matroid $(E;\Phi)$ is called \textit{even} if for any pair of feasible sets $X,Y\in\Phi$, we have $\lvert X\rvert \equiv \lvert Y\rvert \mod 2$.

To a~given simple graph $G=(V,E)$ with vertex set $V$ and edge set $E$, we can associate its \emph{graphic delta-matroid}
$D(G)=(V(G);\Phi(G))$, also called the \textit{non-degeneracy delta-matroid} of~$G$
with ground set $E(D(G)) = V(G)$ and admissible sets $\Phi(G)$ consisting of all $A\subset V(G)$
such that the adjacency matrix of the subgraph $G(A)$ induced by $A$ is non-degenerate.

\begin{definition}[Local duality/Partial dual/twist]
    For a~delta-matroid $D=(E;\Phi)$, and a~subset $A\subset E$, the partial dual of $D$ with respect to $A$ is defined as
    \[
D\ast A = (E;\Phi\ast A), \text{ where } \Phi\ast A = \{F\triangle A : F\in \Phi\}.
\]
\end{definition}
The above operation $\ast$ is also called the \textit{twist} operation. For embedded graphs,
it was introduced in~\cite{Chmutov2009}.

\begin{definition}[Binary Delta-Matroid]
    A delta-matroid $D=(E;\Phi)$ is \emph{binary} if there exists $A\subset E$ such that $D\ast A$ is a~graphic delta-matroid.
\end{definition}

\begin{definition}
    A \emph{quasi-tree} is a~ribbon graph with exactly one boundary component. A ribbon subgraph of a~connected ribbon graph $\Gamma$ is a~\emph{spanning quasi-tree} of $\Gamma$ if it is a~quasi-tree and has the same vertex set as $\Gamma$.
\end{definition}

\begin{definition}[Ribbon graphic delta-matroid]
    Let $\Gamma=(V,E)$ be a~ribbon graph and let
\[
\Phi(\Gamma):=\{F\subset E(\Gamma) : F \text{ is the edge set of a~spanning quasi-tree of }\Gamma\};
\]
then $D(\Gamma):=(E(\Gamma);\Phi(\Gamma))$ is the \emph{delta-matroid of the ribbon graph} $\Gamma$.
\end{definition}

We state some well-known properties of ribbon graphs and their corresponding delta-matroids in the form of a~lemma, without proof.
\begin{lemma}
{(see~\cite{Bouchet1989MapsAD})}
\label{7.1}
\begin{enumerate}
    \item The delta-matroid $D(\Gamma)$ of a~ribbon graph $\Gamma$ is binary.
    \item The delta-matroid $D(C)$, where $C$ is a~ribbon graph with a~single vertex, is isomorphic to the graphic delta-matroid associated to the intersection graph $g(C)$ of the chord diagram $C$.
    \item $D(\Gamma)$ is an even binary delta-matroid if and only if the ribbon graph $\Gamma$ is orientable.
    \item Given a~delta-matroid $D(\Gamma)=(E=E(\Gamma);\Phi)$ and $A\subset E$, $D\ast A$ is the delta-matroid of the partial dual \textrm{\cite{Chun2018OnTI}} of the ribbon graph $\Gamma$ with respect to $A$, $\Gamma^A$.
\end{enumerate}
\end{lemma}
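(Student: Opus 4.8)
These four facts are classical results of Bouchet, and the plan is to organise them around one computational input and to deduce the surface-theoretic parts (1) and (3) from the one-vertex case (2) together with the partial-duality statement (4); I would prove them in the order (2), (4), (1), (3). The input I use throughout is the relation recalled above: for the one-vertex ribbon graph obtained by thickening a chord diagram $C$ and any $F \subseteq E(C)$, the number of boundary components of the sub-ribbon-graph carried by $F$ equals $1 + \mathrm{corank}_{\F_2} A_{g(C)(F)}$. For part (2) itself this is immediate: the edge set of $C$ is the chord set, which is the vertex set of $g(C)$, and $F$ is the edge set of a spanning quasi-tree exactly when the sub-ribbon-graph on $F$ has a single boundary component, i.e. exactly when $\mathrm{corank}_{\F_2} A_{g(C)(F)} = 0$. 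These $F$ are by definition the feasible sets of the graphic delta-matroid $D(g(C))$, so $\Phi(C) = \Phi(g(C))$ and $D(C) \cong D(g(C))$.

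For part (4) I would prove the equivalent identity $\Phi(\Gamma^A) = \Phi(\Gamma) \ast A$, that is, $F$ is a spanning quasi-tree of $\Gamma^A$ if and only if $F \triangle A$ is a spanning quasi-tree of $\Gamma$. Since the partial dual $\Gamma^A$ factors as a composition of single-edge partial duals and the twist $\ast A$ factors as a composition of single-element twists, it suffices to treat $A = \{e\}$ and induct on $|A|$. For a single edge one checks from the ribbon-graph definition of the dual at $e$ that a sub-ribbon-graph of $\Gamma^{\{e\}}$ on $F$ is a quasi-tree if and only if the sub-ribbon-graph of $\Gamma$ on $F \triangle \{e\}$ is a quasi-tree. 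This single-edge verification is the combinatorial heart of the lemma; it is exactly the partial-duality content of \cite{Chmutov2009, Chun2018OnTI}, and I expect it to be the main obstacle, since parts (1) and (3) both rest on it.

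For part (1), choose a spanning tree $T$ of a connected $\Gamma$ and put $A = E(T)$. A thickened tree has no independent cycles, hence Euler genus $0$ and a single boundary component, so $T$ is already a spanning quasi-tree; moreover the number of vertices of $\Gamma^A$ equals the number of boundary components of the spanning sub-ribbon-graph on $A$, namely $1$. Thus $\Gamma^A$ is a one-vertex ribbon graph, so $D(\Gamma^A)$ is graphic by (2), and by (4) the twist $D(\Gamma) \ast A = D(\Gamma^A)$ is graphic. This is precisely the statement that $D(\Gamma)$ is binary; for disconnected $\Gamma$ one argues componentwise.

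For part (3) I would first note that evenness is twist-invariant, because $|F \triangle A| \equiv |F| + |A| \pmod 2$ shows that the twist $\ast A$ shifts all feasible cardinalities by one and the same parity; and orientability is a partial-duality invariant, since $(\Gamma^A)^A = \Gamma$. Twisting by a spanning tree as in (1) therefore reduces both properties to the one-vertex ribbon graph $\Gamma^A$, which is orientable if and only if no ribbon of it is twisted, i.e. it is a genuine chord diagram. If it is orientable, then for a spanning quasi-tree $F$ one has $|F| = \gamma(F)$ (as there is a single vertex), which is even, so $D(\Gamma^A)$ and hence $D(\Gamma)$ is even. If it is non-orientable, a twisted ribbon $e$ is a M\"obius band with one boundary component, so both $\emptyset$ and $\{e\}$ are feasible, of opposite parity, and $D(\Gamma)$ is not even. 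Once the partial-duality facts of (4) are in hand, this completes the lemma.
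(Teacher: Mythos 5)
First, a structural point: the paper offers no proof of this lemma at all --- it is stated explicitly ``without proof'' as a package of Bouchet's classical results (\cite{Bouchet1989MapsAD}, with the partial-duality statement from \cite{Chmutov2009,Chun2018OnTI}). So your proposal can only be judged against the standard arguments in the literature. Your overall architecture --- prove (2) from the boundary-components/corank relation, prove (4) by factoring both the partial dual and the twist into single-element operations, then deduce (1) and (3) by twisting along a spanning tree --- is exactly the classical route, and parts (2), the reduction step of (4), and the parity argument in (3) are sound in outline. Note, however, that the single-edge case of (4), which you correctly flag as the combinatorial heart, is asserted rather than verified, so that part remains a sketch.

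The genuine gap is in your proof of (1), and it is not cosmetic. You apply (2) to the one-vertex ribbon graph $\Gamma^{A}$, $A=E(T)$; but (2), as you prove it, covers only \emph{orientable} one-vertex ribbon graphs, i.e.\ genuine chord diagrams. Partial duality preserves orientability (a fact you also need and misjustify --- $(\Gamma^A)^A=\Gamma$ says the operation is an involution, which implies nothing about preserving orientability), so when $\Gamma$ is non-orientable, $\Gamma^{A}$ has a twisted loop $e$. Then both $\emptyset$ and $\{e\}$ are feasible for $D(\Gamma^{A})$ (a M\"obius band has one boundary component), so $D(\Gamma^{A})$ has feasible sets of both parities. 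But every graphic delta-matroid in the paper's sense (simple graph, zero-diagonal adjacency matrix over $\F_2$) has only even feasible sets, and a twist $\ast A$ shifts all feasible cardinalities by $|A| \bmod 2$; hence $D(\Gamma^{A})$ is not graphic and no twist of $D(\Gamma)$ ever will be. The M\"obius loop itself is a clean counterexample to your argument: $D=(\{e\};\{\emptyset,\{e\}\})$ equals its own unique twist and is not graphic. Your own argument for (3) makes the tension explicit: with the paper's definitions, ``binary'' forces ``even'', so (1) and (3) cannot both hold for non-orientable $\Gamma$. The repair is Bouchet's actual setting \cite{Bouchet1991RepresentabilityO}: representability by symmetric $\F_2$-matrices with \emph{arbitrary} diagonal --- equivalently, intersection graphs with a loop at each twisted chord (framed chord diagrams) --- together with the correspondingly generalized version of (2) relating boundary components of an arbitrary one-vertex ribbon graph to the corank of that matrix. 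With that notion of binary, your spanning-tree argument for (1) goes through verbatim, and ``even binary'' (zero diagonal) is then precisely what characterizes orientability in (3).
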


\subsection{The Hopf algebra of delta-matroids}

Two set systems are said to be isomorphic if there is a~one-to-one correspondence between
their ground sets taking the feasible sets of the first set system to that of the second
one and vice versa.
Isomorphism classes of set systems span an infinite dimensional graded vector space:
\[
\mathcal{S} = \mathcal{S}_0 \oplus \mathcal{S}_1 \oplus \mathcal{S}_2 \oplus \hdots,
\]
where $\mathcal{S}_i$ is the vector space spanned by isomorphism classes of set systems
with ground set of cardinality $i$.
Given two set systems $D_1=(E_1;\Phi_1)$, $D_2=(E_2;\Phi_2)$, their \textit{product} is defined as
\[
D_1D_2=(E_1 \sqcup E_2;\{X \sqcup Y : X\in \Phi_1, Y\in \Phi_2\}).
\]
This product extends to $\mathcal{S}$ by linearity to a~graded bilinear multiplication
\[
m:\mathcal{S}\otimes \mathcal{S} \rightarrow \mathcal{S}, \hspace{1.0cm} m(D_1 \otimes D_2) = D_1D_2.
\]
The unit with respect to $m$ is given by $(\emptyset;\{\emptyset\})$, and it generates $\mathcal{S}_0$.

The product of two binary delta-matroids is a~binary delta-matroid~\cite{Chun2018OnTI}.
Thus, the vector space of (isomorphism classes of)
binary delta-matroids, $\B$, forms a~graded commutative subalgebra of the algebra of set systems $\mathcal{S}$:
\[
\B = \B_0 \oplus \B_1 \oplus \B_2 \hdots
\]

Similarly, the vector space of even binary delta-matroids, $\B^e$, forms a~graded subalgebra of $\B$:
\[
\B^e = \B_0^e \oplus \B_1^e \oplus \B_2^e \oplus \hdots
\]

If an embedded graph~$\Gamma$ is obtained as a~result of gluing
    a~vertex chosen arbitrarily in an embedded graph~$\Gamma_1$ to a~vertex chosen arbitrarily in
    an embedded graph $\Gamma_2$, then the delta-matroid of~$\Gamma$ is the product of delta-matroids of~$\Gamma_1$
    and~$\Gamma_2$: $D(\Gamma)=D(\Gamma_1)D(\Gamma_2)$.

Given a~delta-matroid $D=(E;\Phi)$, its \textit{coproduct} $\mu(D)$ is given as
 \[
\mu(D) = \sum_{E'\subset E} D(E') \otimes D(E\setminus E'),
\]
where $D(E')$ is the \textit{restriction} of a~delta-matroid to the subset $E'$ of $E$. It is easily proved that $\mu(D_1D_2) = \mu(D_1)\mu(D_2)$. The notion of a~restriction for delta-matroids is described below.
The coproduct is extended by linearity to a~comultiplication

\[
\mu : \B \rightarrow \B\otimes \B,
\]
which respects the property of being even.

Let $D=(E;\Phi)$ be a~delta matroid. A \textit{coloop} is defined to be an element $e\in E$ such that $\forall X\in \Phi, e\in X$; and a~\textit{loop} is an element $e\in E$ such that $\forall X\in \Phi, e\notin X$.

\begin{definition}[Deletion]
    Given a~delta-matroid $D=(E;\Phi)$ and an element $e\in E$, the \emph{deletion} $D\setminus e$ is the set system defined as $D\setminus e:= (E\setminus e; \Phi')$, where,
    if $e$ is not a~coloop, then
    \[
\Phi' = \{X: X\in \Phi: e\notin X\},
\]
    and if $e$ is a~coloop, then
    \[
\Phi' = \{X\setminus e: X\in \Phi, e\in X\}.
\]
\end{definition}

\begin{definition}[Contraction]
    Given a~delta-matroid $D=(E;\Phi)$ and an element $e\in E$, the \emph{contraction} $D/ e$ is the set system defined as $D/ e:= (E\setminus e; \Phi')$, where,
    if $e$ is not a~loop, then
    \[
\Phi' = \{X\setminus e: X\in \Phi: e\in X\},
\]
    and if $e$ is a~loop, then
    \[
\Phi' = \Phi.
\]
\end{definition}

\begin{lemma}
    If $D$ is a~delta-matroid, then $D\setminus e$ and $D/e$ are also delta-matroids, and $D/e = (D\ast e)\setminus e$.
\end{lemma}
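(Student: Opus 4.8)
The plan is to prove the three assertions in a way that lets the identity $D/e=(D\ast e)\setminus e$ carry most of the weight, so that the delta-matroid property of the contraction is deduced rather than checked by hand. First I would verify that identity by unwinding the definitions while tracking the status of $e$. The key bookkeeping observation is that twisting by the singleton $\{e\}$ interchanges the roles of loop and coloop: $e$ is a loop of $D$ exactly when it is a coloop of $D\ast e$, and conversely, while if $e$ is neither a loop nor a coloop of $D$ the same is true of $D\ast e$. In each of these three branches one checks that the feasible sets produced by $D/e$ and by $(D\ast e)\setminus e$ coincide. For instance, when $e$ is a loop of $D$ every feasible set avoids $e$, so $\Phi(D/e)=\Phi$; on the other side $e$ is a coloop of $D\ast e$, the coloop rule for deletion applies, and $\{X\setminus e: X\in\Phi\ast e,\ e\in X\}$ collapses back to $\Phi$. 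The two remaining branches are analogous. This step is pure set manipulation and uses the Symmetric Exchange Axiom (SEA) nowhere.

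Next I would show that $D\setminus e$ is again a delta-matroid. Properness is immediate in each branch: if $e$ is not a coloop there is by definition a feasible set avoiding $e$, and if $e$ is a coloop then $\Phi\neq\emptyset$ and $\{X\setminus e:X\in\Phi\}$ inherits this. For the SEA the central point is that symmetric differences are unchanged under the passage to $D\setminus e$: two feasible sets $X',Y'$ of $D\setminus e$ lift to feasible sets $X,Y$ of $D$ with $X'\triangle Y'=X\triangle Y$, where in the coloop case both lifts contain $e$, which therefore cancels. Hence any $a\in X'\triangle Y'$ lies in $X\triangle Y$; the exchange partner $b$ furnished by the axiom for $D$ again lies in $X\triangle Y$ and so differs from $e$; and toggling $a,b$ commutes with deleting $e$, so $X'\triangle\{a,b\}$ is feasible in $D\setminus e$.

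For the twist I would record the short fact that $D\ast A$ is a delta-matroid for every $A\subseteq E$: properness is clear, and since $(X\triangle A)\triangle(Y\triangle A)=X\triangle Y$ and $(X\triangle\{a,b\})\triangle A=(X\triangle A)\triangle\{a,b\}$, the SEA for $D\ast A$ reduces verbatim to the SEA for $D$. Combining the three ingredients, $D\ast e$ is a delta-matroid, hence so is $(D\ast e)\setminus e$ by the deletion step, and the identity of the first paragraph identifies this set system with $D/e$; therefore $D/e$ is a delta-matroid as well.

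I expect the main obstacle to be organizational rather than conceptual. The definitions of deletion and contraction both branch on whether $e$ is a coloop or a loop, and the twist swaps these two conditions, so the identity $D/e=(D\ast e)\setminus e$ must be checked branch by branch with the correct defining formula substituted on each side; the one genuine subtlety is keeping the properness (nonemptiness of $\Phi$) accounting correct in the degenerate branches. Once the symmetric-difference invariance observations above are in place, every SEA verification is a one-line transfer from the axiom for $D$.
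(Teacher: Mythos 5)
Your proof is correct. Note that the paper itself gives no proof of this lemma: it is stated as a known fact from the delta-matroid literature (going back to Bouchet), so there is no argument of the authors to compare yours against; your write-up supplies the missing verification in full. The organization you chose is also the efficient one: prove the identity $D/e=(D\ast e)\setminus e$ first by a case analysis on whether $e$ is a loop, a coloop, or neither (using that the twist by $\{e\}$ exchanges the loop and coloop conditions), verify that deletion and twisting preserve the Symmetric Exchange Axiom via the invariance of symmetric differences under these operations, and then obtain the delta-matroid property of $D/e$ as a corollary rather than by a separate check. Your handling of the two genuine delicacies is sound: properness of $D\setminus e$ in both branches (in the coloop branch this rests on $\Phi\neq\emptyset$, which holds since delta-matroids are proper by definition), and the fact that the exchange partner $b$ furnished by the axiom for $D$ lies in $X\triangle Y$ and hence differs from $e$, so toggling $\{a,b\}$ commutes with removing $e$. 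One micro-remark: the three branches of your case analysis are exclusive precisely because $e$ cannot be both a loop and a coloop (that would force $\Phi=\emptyset$), which is again properness; this is implicit in your argument and worth a half-sentence if you write it up formally.
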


Hence, we can define $D\setminus A$ and $D/ A$, for a~given subset $A$ of the ground set as
the result of a~sequence of contractions or deletions with respect to all elements in $A$.
In particular, the \emph{restriction} of a~delta-matroid to a~subset $E'$ of its ground set $E$, is the delta-matroid obtained
by deleting all elements in $E\setminus E'$.

\begin{proposition}
    If $D(\Gamma) = (E(\Gamma); \Phi(\Gamma))$ is the delta-matroid of an embedded graph $\Gamma$ and $E' \subset E(\Gamma)$ is a~subset of its edges such that the corresponding spanning subgraph is connected, then $D(E')$ is the delta-matroid of the spanning subgraph $(V(\Gamma), E')$.
    \end{proposition}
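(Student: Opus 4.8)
The plan is to read off the feasible sets of both delta-matroids and match them, using the connectedness hypothesis as the device that controls the deletion operation. Recall that $\D_{E'}$ is the restriction $D(\Gamma)|_{E'}$, obtained by deleting, one at a time, every edge of $E(\Gamma)\setminus E'$, while the delta-matroid $D((V(\Gamma),E'))$ of the spanning subgraph has as its feasible sets the edge sets of the spanning quasi-trees of $(V(\Gamma),E')$. Both ground sets equal $E'$, so it suffices to identify the two families of feasible sets.

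First I would record the purely combinatorial identity
\[
\Phi((V(\Gamma),E'))=\{X\in\Phi(\Gamma):X\subseteq E'\}.
\]
This holds because a ribbon subgraph whose edge set $F$ is contained in $E'$ is literally the same object --- same vertex set $V(\Gamma)$, same ribbon structure inherited from $\Gamma$, hence the same number of boundary components --- whether we view it inside $\Gamma$ or inside $(V(\Gamma),E')$. Thus $F$ is the edge set of a spanning quasi-tree of one exactly when it is the edge set of a spanning quasi-tree of the other.

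Next I would bring in connectedness. Since $(V(\Gamma),E')$ is connected, its underlying graph has a spanning tree, and the associated ribbon subgraph, being acyclic, thickens to a disk and so is a quasi-tree with a single boundary component. Hence $\Phi((V(\Gamma),E'))\neq\emptyset$; equivalently, there is a feasible set $X_0\in\Phi(\Gamma)$ with $X_0\subseteq E'$. This non-emptiness is precisely what makes the restriction computable cleanly, and it is also what guarantees that $D((V(\Gamma),E'))$ is a proper set system to begin with.

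The main point --- and the only step where the hypothesis genuinely enters --- is to show that the iterated deletion producing $\D_{E'}$ never meets the coloop case, so that its feasible family is exactly $\{X\in\Phi(\Gamma):X\subseteq E'\}$. Deleting a non-coloop $e$ discards the feasible sets containing $e$, whereas deleting a coloop (an element lying in every feasible set) instead strips $e$ from every feasible set and would destroy this description. Deleting the edges of $E(\Gamma)\setminus E'$ in an arbitrary order $f_1,\dots,f_k$, I would argue by induction that after removing $f_1,\dots,f_{i-1}$ the feasible family is $\{X\in\Phi(\Gamma):X\cap\{f_1,\dots,f_{i-1}\}=\emptyset\}$. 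Indeed $X_0$ lies in this family and avoids $f_i$, so $f_i$ is not a coloop at that stage; the deletion is therefore of the non-coloop type and the induction advances. Since the resulting family does not depend on the chosen order, this simultaneously shows the restriction is well defined. At the last step the family is $\{X\in\Phi(\Gamma):X\cap(E(\Gamma)\setminus E')=\emptyset\}=\{X\in\Phi(\Gamma):X\subseteq E'\}$, which by the first identity equals $\Phi((V(\Gamma),E'))$. Matching ground sets then yields $\D_{E'}=D((V(\Gamma),E'))$. The one delicate point is the coloop bookkeeping just described; everything else is a direct dictionary between ribbon subgraphs and feasible sets.
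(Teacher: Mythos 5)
Your proof is correct. Note that the paper itself offers no argument for this proposition at all --- it is stated as one of several properties of delta-matroids of ribbon graphs recorded without proof --- so there is no ``paper route'' to compare against; your write-up is a genuine filling-in of an omitted verification, and it is complete. The two ingredients you isolate are exactly the right ones: (i) feasibility of an edge set $F\subseteq E'$ is intrinsic to the ribbon subgraph $(V(\Gamma),F)$, since its boundary components are the same whether it is viewed inside $\Gamma$ or inside $(V(\Gamma),E')$, giving $\Phi((V(\Gamma),E'))=\{X\in\Phi(\Gamma):X\subseteq E'\}$; and (ii) connectedness of $(V(\Gamma),E')$ supplies a spanning tree, whose thickening is a disk and hence a quasi-tree, so its edge set $X_0\subseteq E'$ is feasible and survives every deletion --- this is precisely the certificate that no edge of $E(\Gamma)\setminus E'$ is ever a coloop at any stage, so each deletion is of the non-coloop type and the feasible family stays of the form $\{X\in\Phi(\Gamma):X\cap\{f_1,\dots,f_i\}=\emptyset\}$. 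Your induction also yields, as a byproduct, that the restriction is independent of the order of deletions, which the paper's definition of restriction tacitly assumes. The only thing worth flagging is cosmetic: when you say a spanning tree ``thickens to a disk,'' you may want to note that a ribbon graph whose underlying graph is a tree is automatically orientable (no cycles, hence no M\"obius band), or simply invoke the paper's standing restriction to orientable embedded graphs, so that the one-boundary-component count is immediate from $\chi = |V|-|E'| = 1 = 2-2g-b$.
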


\begin{proposition}
    For a~binary delta-matroid $D=(E; \Phi)$, its restriction $D(E')$ to an arbitrary subset $E'\subset E$ is a~binary delta-matroid.
\end{proposition}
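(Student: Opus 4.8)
The plan is to prove binarity through the symmetric matrix representation over $GF(2)$ and to show that this representation survives deletion, since the restriction $D_{E'}=D\setminus(E\setminus E')$ is nothing but the deletion of the elements of $E\setminus E'$ performed one at a time. By the paper's definition $D$ is binary exactly when $D\ast A$ is graphic for some $A\subseteq E$; writing the graphic delta-matroid as $D(A_G)$, where $A_G$ is the (symmetric, zero-diagonal) adjacency matrix of $G$ over $GF(2)$ and $D(N)$ denotes the delta-matroid whose feasible sets are the subsets $F$ with $N[F]$ nonsingular, this says $D=D(A_G)\ast W$ with $W=A$. Note that $D(N)$ always has $\emptyset$ feasible (the empty principal submatrix being nonsingular by convention), so $D(N)$ has no coloops. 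The plan is therefore to show that deleting a single element $e$ from a delta-matroid of the form $D(N)\ast W$, with $N$ symmetric over $GF(2)$, again produces such an object, and then to iterate; throughout I would use the commutation of twists and deletions at distinct elements, together with the identity $D/e=(D\ast e)\setminus e$ recorded earlier.

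First I would treat the case $e\notin W$. Then twisting at $W$ and deleting $e$ act on disjoint elements and commute, so $(D(N)\ast W)\setminus e=(D(N)\setminus e)\ast W$. Because $D(N)$ has no coloops, its deletion is simply the passage to the feasible sets avoiding $e$, which are precisely the subsets of $E\setminus e$ cutting out nonsingular principal submatrices of $N[E\setminus e]$; hence $D(N)\setminus e=D(N[E\setminus e])$. In the original situation $N=A_G$ has zero diagonal and $N[E\setminus e]=A_{G[E\setminus e]}$ is the adjacency matrix of an induced subgraph, so in fact $D(N)\setminus e$ is again graphic, and $(D(N)\ast W)\setminus e$ is visibly binary.

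The substantive case is $e\in W$. Here I would rewrite $(D(N)\ast W)\setminus e=\bigl(D(N)\ast(W\setminus e)\ast e\bigr)\setminus e=(D(N)/e)\ast(W\setminus e)$, using commutation at distinct elements and $D/e=(D\ast e)\setminus e$. It then remains to verify that the contraction $D(N)/e$ of a symmetric-$GF(2)$-represented delta-matroid is again of this type. When $N_{ee}=1$ one pivots on $e$: the principal pivot transform of $N$ on $\{e\}$ is again symmetric over $GF(2)$, and contraction corresponds to deleting the row and column of $e$ from it, i.e. to a Schur complement that stays symmetric over $GF(2)$. When $N_{ee}=0$ one analyses $e$ as a loop or as a near-loop and checks directly that the contracted feasible sets are still cut out by nonsingular principal submatrices of a symmetric $GF(2)$ matrix $N'$. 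In every case $D(N)/e=D(N')$ with $N'$ symmetric over $GF(2)$, so $(D(N)\ast W)\setminus e=D(N')\ast(W\setminus e)$, which is binary by Bouchet's representation theorem identifying symmetric $GF(2)$ representability (up to twist) with binarity; see \cite{Bouchet1989MapsAD,Chun2018OnTI}.

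The hard part is exactly this last step: that contraction preserves representability by a symmetric matrix over $GF(2)$, particularly the degenerate case $N_{ee}=0$ where no honest pivot on $e$ is available, and the accompanying fact that a general symmetric $GF(2)$ matrix (no longer zero-diagonal after pivoting) still defines a binary delta-matroid, i.e. can be twisted back to a graphic one by clearing the diagonal. Once deletion of a single element is known to preserve the class, induction on $|E\setminus E'|$ shows that $D_{E'}$ is of the form $D(N)\ast W$, hence binary, completing the proof. If one is willing to quote the standard fact that the class of binary delta-matroids is minor-closed, the proposition follows at once, a restriction being a sequence of deletions.
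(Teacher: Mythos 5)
The paper itself offers \emph{no} proof of this proposition: it is stated as a known fact, being in essence the theorem of Bouchet and Duchamp \cite{Bouchet1991RepresentabilityO} (see also \cite{Chun2018OnTI}) that GF(2)-representable delta-matroids are closed under minors. So your proposal can only be judged on its own merits. Its skeleton is the natural and correct one: a restriction is an iterated deletion; deletion at $e\notin W$ commutes with the twist and, since $D(N)$ has no coloops, just passes to the induced submatrix $N[E\setminus e]$, staying graphic; deletion at $e\in W$ is rewritten via $D/e=(D\ast e)\setminus e$ and commutation of operations at distinct elements as a contraction of the representable part followed by a twist.

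The genuine gap sits exactly where you park "the hard part". In the paper's setting $N=A_G$ has zero diagonal, so your $N_{ee}=1$ pivot/Schur-complement case never arises; the only substantive case is $N_{ee}=0$ with $e$ not a loop, and for it you give no argument ("one analyses $e$ as a loop or as a near-loop and checks directly"). The missing tool is the pivot at a feasible \emph{pair}: if $e$ is not a loop of $D(A_G)$ it has a neighbour $u$ in $G$, so $\{u,e\}$ is feasible, and Tucker's principal-pivot identity $D(N)\ast F=D(N\ast F)$ for feasible $F$ gives $D(A_G)/e=\bigl(D(A_G)\ast\{u,e\}\ast u\bigr)\setminus e=\bigl(D(A_{G'})\setminus e\bigr)\ast u=D(A_{G'-e})\ast u$, where $G'$ is the simple graph whose adjacency matrix is $A_G\ast\{u,e\}$ (pivoting a zero-diagonal symmetric GF(2) matrix at such a pair keeps the diagonal zero). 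Without this step the induction does not close. Worse, the patch you propose instead is false: $D(N)$ for a general symmetric GF(2) matrix can \emph{not} always be "twisted back to a graphic one by clearing the diagonal"; already $N=(1)$ gives $D(N)=(\{e\};\{\emptyset,\{e\}\})$, which has two feasible sets, whereas every twist of the unique one-element graphic delta-matroid has exactly one. This matters because the paper's definition of binary (a twist of the delta-matroid of a \emph{simple} graph) captures only even delta-matroids, a strictly smaller class than the twists of $D(N)$ for arbitrary symmetric $N$. Your closing fallback --- quoting minor-closedness of the standard binary class from \cite{Bouchet1989MapsAD,Chun2018OnTI} --- is legitimate as a citation, but to land back in the paper's narrower class you must add that deletion preserves evenness and that an even GF(2)-representable delta-matroid admits a zero-diagonal, i.e.\ graphic, representation.
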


\begin{theorem}
    The vector space $\B$ endowed with the multiplication and comultiplication defined above forms a~graded commutative cocommutative Hopf Algebra. The subalgebra $\B^e \subset \B$ spanned by even binary delta-matroids forms a~Hopf subalgebra in this Hopf algebra.
\end{theorem}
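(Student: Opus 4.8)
The plan is to check that the multiplication $m$ and comultiplication $\mu$ already introduced satisfy the bialgebra axioms, and then to invoke the standard fact that a connected graded bialgebra is automatically a Hopf algebra. Almost every axiom reduces to elementary bookkeeping with ground sets and feasible sets; the only nonformal ingredients are the closure statements, and these are exactly the results quoted above (the product of two binary delta-matroids is binary, and the restriction of a binary delta-matroid is binary). First I would record the algebra structure: associativity and commutativity of $m$ follow from the corresponding properties of the disjoint union $\sqcup$ of ground sets, together with the obvious bijection of feasible sets $X\sqcup Y$, while the unit is $\mathbf{1}=(\emptyset;\{\emptyset\})$, which generates $\mathcal{S}_0=\B_0$. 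Since $m$ adds cardinalities of ground sets and keeps us inside $\B$ (resp. $\B^e$) by the cited fact, this makes $\B$ a graded commutative algebra.

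Next I would establish the coalgebra structure. The counit $\varepsilon:\B\to\C$ sends $\mathbf{1}\mapsto 1$ and annihilates every $\B_n$ with $n\ge 1$. Cocommutativity is immediate from the symmetry $E'\leftrightarrow E\setminus E'$ in the definition of $\mu$. For coassociativity the key observation is that restriction is transitive: for $E''\subset E'\subset E$ one has $(D|_{E'})|_{E''}=D|_{E''}$. Consequently both $(\mu\ot\mathrm{id})\circ\mu$ and $(\mathrm{id}\ot\mu)\circ\mu$ equal the triple sum $\sum D|_{E_1}\ot D|_{E_2}\ot D|_{E_3}$ taken over ordered partitions $E=E_1\sqcup E_2\sqcup E_3$. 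Closure under $\mu$ is the cited proposition that restrictions of binary delta-matroids are binary (and, for $\B^e$, the noted fact that $\mu$ respects evenness).

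Then I would verify the bialgebra compatibility $\mu\circ m=(m\ot m)\circ(\mathrm{id}\ot\tau\ot\mathrm{id})\circ(\mu\ot\mu)$, where $\tau$ is the flip; this is precisely the already-noted identity $\mu(D_1D_2)=\mu(D_1)\mu(D_2)$, proved using the bijection between subsets $A\subset E_1\sqcup E_2$ and pairs $(A\cap E_1,A\cap E_2)$ together with the interchange law $(D_1D_2)|_{A_1\sqcup A_2}=(D_1|_{A_1})(D_2|_{A_2})$, which is immediate from the definitions of product and restriction. Compatibility of the (co)units is routine. At this point $\B$ is a connected graded bialgebra, since $\B_0=\C\cdot\mathbf{1}$ and both operations respect the grading. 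A connected graded bialgebra carries a unique antipode, given by $S(\mathbf{1})=\mathbf{1}$ and, on the positive-degree part via the reduced coproduct $\bar\mu$, by $S=-\mathrm{id}-m\circ(S\ot\mathrm{id})\circ\bar\mu$; this completes the Hopf algebra structure on $\B$. For $\B^e$ I would note that it is a graded subspace containing $\mathbf{1}$ and closed under both $m$ and $\mu$ by the evenness-preservation statements, hence a sub-bialgebra; being itself connected graded, its antipode is the restriction of $S$, so $\B^e$ is a Hopf subalgebra.

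I expect no serious obstacle here: the content is bookkeeping, and the only points that are not purely formal, namely that $m$ and $\mu$ do not lead outside the classes of binary and even binary delta-matroids, are supplied by the results quoted above. The step deserving the most care is coassociativity, where one must check that the transitivity of restriction genuinely identifies the two iterated coproducts term by term over all ordered triples of disjoint subsets.
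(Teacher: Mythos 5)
Your proof is correct, but it cannot be compared with a proof in the paper, because the paper offers none: the theorem is stated as a summary, with its ingredients scattered through the preceding text --- closure of $\B$ under the product (quoted from \cite{Chun2018OnTI}), closure of restriction (the preceding proposition), the identity $\mu(D_1D_2)=\mu(D_1)\mu(D_2)$ (asserted as ``easily proved''), and preservation of evenness by $\mu$ --- and the construction itself is attributed to \cite{Lando2016DeltamatroidsAV}. Your write-up assembles exactly these ingredients and supplies the two things the paper leaves tacit: the verification of coassociativity via transitivity of restriction, and the observation that a connected graded bialgebra automatically carries a unique antipode, built by the standard recursion $S=-\mathrm{id}-m\circ(S\otimes\mathrm{id})\circ\bar\mu$ on positive degrees; this is indeed the right way to finish, and it also gives the Hopf subalgebra claim for $\B^e$ for free once closure under $m$ and $\mu$ is noted. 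One point deserves a sentence more care than you give it: because deletion of a coloop is defined differently from deletion of a non-coloop, the restriction $D|_{E'}$ is \emph{not} simply $(E';\{X\in\Phi: X\subset E'\})$, and both the well-definedness of $D|_{E'}$ and the transitivity $(D|_{E'})|_{E''}=D|_{E''}$ that your coassociativity argument hinges on rest on the fact that deletions of distinct elements commute. The paper assumes this silently when it defines $D\setminus A$ as the result of ``a sequence'' of deletions, and it is a standard fact for delta-matroids, but it is the one step in your argument that is not pure bookkeeping and should be stated (or cited) explicitly.
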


\subsection{Skew characteristic polynomial for delta-matroids}

Motivated by the correspondence between embedded graphs and binary delta-matroids,
we can generalize the skew characteristic polynomial to the bialgebra~$\B$ of binary delta-matroids.

Note that the non-degeneracy $\nu$ admits a~natural extension to binary delta-matroids:
if $D=(E;\Phi)$ is a~binary delta-matroid, define $\nu(D) = 1$ if $E \in \Phi$ and $0$ otherwise.
Indeed, for a~chord diagram~$C$, its intersection graph~$g(C)$ is non-degenerate if and only if
the boundary of~$C$ is connected, that is, if and only if~$C$ is a~quasi-tree.

\begin{definition}
The \emph{skew characteristic polynomial} $Q:D\mapsto Q_D(u,w)$ is the invariant of even binary delta-matroids
taking values in the
ring $\C[u,w]$ of polynomials in two variables~$u,w$ possessing the following properties:
\begin{itemize}
\item it is multiplicative, that is if $D$ is a~product of two even binary delta-matroids $D_1$ and $D_2$,
$D=D_1D_2$, then $Q_{D}=Q_{D_1}\cdot Q_{D_2}$;
\item for the delta-matroid $(\{1\};\{\emptyset\})$ corresponding to an orientable embedded
graph with a~single vertex and a~single edge, the polynomial is~$u$;
\item for the delta-matroid $(\{1\};\{\{1\}\})$ corresponding to the embedded
graph with two vertices and a~single edge (such an embedded graph is necessarily orientable), the polynomial is~$w+1$;
\item for any even binary delta-matroid~$D$ whose ground set contains at least two elements,
the value $Q_{\pi(D)}$ of the invariant~$Q$ on the projection $\pi(D)$ to the subspace of primitives
along the subspace of decomposable elements in the
Hopf algebra~$\B^e$ of even binary delta-matroids is a~constant;
\item the free term of the polynomial~$Q_D$ coincides with $\nu(D)$.
\end{itemize}
\end{definition}

\begin{remark} We chose the value of the skew characteristic polynomial on the delta-matroid $(\{1\};\{\{1\}\})$
to be $w+1$ rather than just~$w$ in order to make it consistent with the requirement
that the free term of the polynomial coincides with the non-degeneracy of the delta-matroid.
\end{remark}

Since the set~$V(G)$ of vertices of a~graph~$G$ is admissible for the delta-matroid of
the graph if and only if $\nu(G)=1$, we deduce

\begin{theorem}
  The skew characteristic polynomial of a~graph~$G$ coincides with the skew characteristic polynomial
  of its delta-matroid.
\end{theorem}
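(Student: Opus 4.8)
The plan is to deduce the identity from a uniqueness principle for multiplicative invariants, transported along the map $\phi\colon G\mapsto D(G)$ sending a simple graph to its graphic (nondegeneracy) delta-matroid. First I would record the uniqueness statement on the graph side: a multiplicative invariant $R\colon\cG\to\C[u]$ that takes the value $u$ on the one-vertex graph, is constant on the primitive subspaces $P(\cG_n)$ for all $n\ge2$, and whose free term equals $\nu$ is uniquely determined. Indeed, multiplicativity makes $R$ an algebra homomorphism, hence determined by its values on a basis of primitives; the degree-one primitive is forced to $u$, while on a primitive $p$ of degree $n\ge2$ the value $R(p)$ is a scalar constant whose free term, by hypothesis, is the linear extension $\nu(p)$ — so $R(p)=\nu(p)$. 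By Theorem~\ref{thMP} and the theorem on the value of $Q$ on primitives, the skew characteristic polynomial $Q$ of graphs is multiplicative and enjoys these three properties, so it coincides with this unique invariant.

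Next I would verify that $\phi$ is a morphism of graded Hopf algebras $\cG\to\B^e$. Multiplicativity, $D(G_1\sqcup G_2)=D(G_1)\,D(G_2)$, is immediate from the block-diagonal form of the adjacency matrix of a disjoint union. Compatibility with the coproduct reduces to the identity $D(G)|_U=D(G(U))$ for every vertex subset $U$: the restriction of a delta-matroid is built from deletions, and since the empty set is always feasible for a graphic delta-matroid, no element is ever a coloop, so each deletion simply discards the feasible sets meeting the deleted vertex; iterating gives the feasible sets $\{A\subseteq U:\ A_{G(A)}\text{ nondegenerate}\}=\Phi(G(U))$. Finally, the image lies in $\B^e$: graphic delta-matroids are binary, and they are even because a nondegenerate alternating matrix over the field of two elements has even size, so every feasible set has even cardinality.

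Being a graded Hopf algebra homomorphism, $\phi$ carries primitives to primitives and decomposables to decomposables, hence commutes with the projection $\pi$; in particular $\phi(\pi(G))=\pi(D(G))$. I would then check that the pullback $R_G:=Q_{D(G)}$ of the delta-matroid invariant satisfies the three defining properties above. It is multiplicative; on the one-vertex graph $D(G)=(\{1\};\{\emptyset\})$, where $Q$ takes the value $u$; and its free term is $\nu(D(G))=\nu(G)$, using that $V(G)$ is feasible for $D(G)$ precisely when $\nu(G)=1$. For the primitive condition, $R_{\pi(G)}=Q_{\phi(\pi(G))}=Q_{\pi(D(G))}$ is a scalar constant whenever $G$ has at least two vertices, by the corresponding axiom for $\B^e$. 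Crucially, the variable $w$ never enters: it arises only through the generator $(\{1\};\{\{1\}\})$ of value $w+1$, which never occurs in the primitive decomposition of a graphic delta-matroid — whose degree-one part uses solely $(\{1\};\{\emptyset\})$, of value $u$ — while the higher primitives contribute scalar constants; hence $R_G\in\C[u]$. By the uniqueness principle, $R=Q$, that is $Q_{D(G)}=Q_G$.

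The main obstacle I anticipate is the coproduct compatibility $D(G)|_U=D(G(U))$ — more precisely, making sure the delta-matroid restriction, defined through the two coloop cases of deletion, really returns the induced-subgraph feasible sets rather than a twisted variant. The observation that graphic delta-matroids have no coloops, because $\emptyset$ is always feasible, is what makes this step clean and, at the same time, keeps the pullback invariant free of the second variable $w$.
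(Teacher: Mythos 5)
Your proof is correct and follows essentially the same route as the paper: both arguments rest on the axiomatic characterization of the invariant (multiplicativity, value $u$ on the degree-one generator, constancy on primitives of degree at least two, free term equal to nondegeneracy), transported along the morphism $G\mapsto D(G)$. The paper compresses all of this into the single observation that $V(G)$ is feasible for $D(G)$ if and only if $\nu(G)=1$, leaving the Hopf-morphism verification and the uniqueness principle implicit, which you spell out in full (including the worthwhile point that the variable $w$ cannot appear, since graphic delta-matroids have no coloops and all their degree-one restrictions are $(\{1\};\{\emptyset\})$).
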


The notions of the first and the second Vassiliev moves for binary delta-matroids,
as well as $4$-term relations for invariants of binary delta-matroids,
where introduced in~\cite{Lando2016DeltamatroidsAV}.
Similarly to the case of chord diagrams and graphs, since the non-degeneracy
of delta-matroids satisfies $2$-term relations, we immediately deduce

\begin{theorem}
The skew characteristic polynomial of even binary delta-matroids satisfies
$4$-term relations and defines, therefore, a~finite type link invariant.
\end{theorem}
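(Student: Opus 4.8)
The plan is to transport, essentially verbatim, the bookkeeping argument that proves the $4$-term relation for the skew characteristic polynomial of simple graphs, replacing induced subgraphs by restrictions of delta-matroids. The tool is the same reconstruction-from-primitives formula used earlier in Eq.~(\ref{e1}): writing $E$ for the ground set of the even binary delta-matroid $D$, one has $Q_D=\sum_{I}\prod_{\iota\in I}Q_{\pi(D_\iota)}$, where $I$ runs over partitions of $E$ into nonempty blocks $\iota$ and $D_\iota$ denotes the restriction of $D$ to $\iota$. By the defining properties of $Q$, the factor $Q_{\pi(D_\iota)}$ equals one of the generators $u$ or $w+1$ when $|\iota|=1$, and equals the constant $\nu(\pi(D_\iota))$ when $|\iota|\ge 2$. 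Thus $Q_D$ is completely controlled by the nondegeneracy of restrictions, exactly as in the graph case.

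I would then fix two ground-set elements $a,b$ and evaluate the alternating combination $Q_D-Q_{D'_{ab}}-Q_{\widetilde D_{ab}}+Q_{\widetilde D'_{ab}}$ through this expansion. Since the four delta-matroids share the ground set $E$, the same index set of partitions $I$ appears for all four, and it suffices to show that, block by block, the corresponding factors combine to zero. The blocks are sorted according to how they meet $\{a,b\}$. The two compatibility inputs needed are: that the three Vassiliev operations of~\cite{Lando2016DeltamatroidsAV} commute with restriction to any block $\iota\supseteq\{a,b\}$, so that on such a block the move on $D$ induces the same-named move on $D_\iota$; and that on a block that omits $a$, or contains $a$ but not $b$, the operations either fix the restriction or modify it in a way invisible after the deletion of $b$.

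The cancellation is then identical to the graph computation. A block avoiding $a$ yields four equal restrictions, hence a zero alternating contribution; a block containing $a$ but not $b$ makes the primed and unprimed versions coincide after restriction, so its four factors cancel in two pairs; and a block $\iota\supseteq\{a,b\}$ contributes $\nu(\pi(H))-\nu(\pi(H'_{ab}))-\nu(\pi(\widetilde H_{ab}))+\nu(\pi(\widetilde H'_{ab}))$ with $H=D_\iota$, which vanishes because $\nu$ satisfies the $2$-term relation $\nu(H)=\nu(\widetilde H_{ab})$ and is multiplicative, so $\nu\circ\pi$ is again $2$-term invariant. Summing over all partitions gives $Q_D-Q_{D'_{ab}}-Q_{\widetilde D_{ab}}+Q_{\widetilde D'_{ab}}=0$, i.e.\ $Q$ descends to the $4$-term quotient of $\B^e$; combined with the correspondence between $4$-invariants of binary delta-matroids and finite type invariants recalled above, this yields the asserted finite type link invariant.

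The main obstacle is not the cancellation but the two compatibility facts that drive it. The Vassiliev moves for binary delta-matroids are defined intrinsically through the feasible sets and the symmetric exchange axiom rather than through incident edges, so commutation with restriction must be checked from that description; in particular one must verify that a move localized at the pair $\{a,b\}$ acts as the corresponding move after restriction to any subset containing both, and leaves untouched every restriction that omits $b$. A secondary difficulty is the second variable: for the full two-variable polynomial the quantity that must be $2$-term invariant is the corank-type (boundary-component) statistic refining $\nu$, and its invariance under the second Vassiliev move has to be established on $\B^e$ directly, just as the corank of the adjacency matrix was shown to be $2$-term invariant in the graph case. Once these are in place, the proof is a line-by-line transcription of the graph argument.
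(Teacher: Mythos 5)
Your overall architecture is the one the paper intends: the paper deduces this theorem in one line from the fact that nondegeneracy satisfies the $2$-term relation for binary delta-matroids, and your expansion $Q_D=\sum_I\prod_{\iota\in I}Q_{\pi(D_\iota)}$ (the delta-matroid analogue of Eq.~(\ref{e1})), followed by sorting the blocks of each partition according to how they meet $\{a,b\}$, is the right way to flesh that line out; the two localization facts you isolate (commutation of the moves of~\cite{Lando2016DeltamatroidsAV} with restriction to a block containing $\{a,b\}$, and prime-insensitivity on blocks missing $b$) are indeed exactly the inputs needed.

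However, your pivotal cancellation is justified by a false claim: you dispose of a block $\iota\supseteq\{a,b\}$ by asserting that multiplicativity of $\nu$ makes $\nu\circ\pi$ again $2$-term invariant. It is not. Take $G$ on vertices $\{a,b,c,d\}$ with edges $bc,bd$ (or its graphic delta-matroid). Since $\nu$ vanishes on one-element ground sets, only the trivial partition and the three pair--pair partitions survive in the expansion of $\nu\circ\pi$, so
\[
\nu(\pi(G))=\nu(G)-\bigl(\nu_{ab}\nu_{cd}+\nu_{ac}\nu_{bd}+\nu_{ad}\nu_{bc}\bigr),
\qquad \nu_{xy}:=\nu\bigl(G(\{x,y\})\bigr).
\]
Here $\nu(\pi(G))=0-0=0$, while $\widetilde G_{ab}$ has edges $bc,bd,ac,ad$ and $\nu(\pi(\widetilde G_{ab}))=0-(0+1+1)=-2$. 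So $\nu\circ\pi$ violates the $2$-term relation (consistently with the paper's Proposition identifying $Q_{\pi(C)}$ with a signed Hamiltonian-cycle count of $sl(2)$ type, which is a $4$-invariant but by no means a $2$-invariant). The vanishing you need is nevertheless true --- in this example $\nu(\pi(G'_{ab}))=0$ and $\nu(\pi(\widetilde G'_{ab}))=-2$, so the alternating sum is $0$ --- but it must be proved as a separate lemma, ``$\nu\circ\pi$ is a $4$-invariant,'' by running your own block analysis one level deeper: expand $\nu(\pi(D_\iota))$ over partitions of $\iota$ using multiplicativity of $\nu$; split blocks cancel in prime-pairs exactly as in your split-block step, and blocks containing both $a$ and $b$ contribute the $4$-term combination of $\nu$ itself on a restriction, which vanishes by the genuine $2$-term relation $\nu(H)=\nu(\widetilde H_{ab})$. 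A secondary inaccuracy: no corank or boundary-component statistic needs to be $2$-term invariant for this theorem. The delta-matroid polynomial $Q_D(u,w)$ is axiomatized through $\nu$ alone, the variable $w$ entering only via the value $w+1$ on the generator $(\{1\};\{\{1\}\})$; you are conflating it with the refined polynomial $\overline{Q}_G(u,v)$ of the graph section, whose $4$-term relation is a separate statement.
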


\subsection{Examples}

The delta-matroid $(\{1,2\};\{\{1\},\{2\}\})$ corresponds to the orientable embedded
graph with two vertices connected by two ribbons. Its projection to primitives looks
like
\[
\pi:(\{1,2\};\{\{1\},\{2\}\})\mapsto (\{1,2\};\{\{1\},\{2\}\})-(\{1\};\{\{1\}\})^2.
\]
This gives the value
\[
Q_{(\{1,2\};\{\{1\},\{2\}\})}=(w+1)^2 -1=w^2 +2w.
\]

More generally, for the delta-matroid of the orientable embedded graph of genus~$0$
consisting of two vertices connected by~$n$ ribbons, the skew characteristic polynomial is
\[
w^n +nw^{n-1}.
\]

\section{Problems and questions}\label{s4}

The following questions arise naturally in the study of any newly introduced
invariant of graphs and delta-matroids:
\begin{itemize}
  \item Compute explicitly the skew characteristic polynomials for important series of
  graphs and delta-matroids;
  \item Isolate properties of graphs and delta-matroids reflected in their skew characteristic
  polynomials;
  \item Try to find out natural contraction-deletion relations allowing one
  to compute the skew characteristic polynomial more efficiently than
  directly from the definition;
  \item Construct an analogue of the Laplacian operator whose spectral
  theory is governed by the skew characteristic polynomial;
  \item Categorify the skew characteristic polynomial.
\end{itemize}

{\small
    
}

\EditInfo{May 14, 2023}{September 20, 2023}{Jacob Mostovoy and Sergei Chmutov}


\begin{thebibliography}{}

\end{thebibliography}


\begin{thebibliography}{99}
    
    \bibitem{Bouchet1989MapsAD}
    A.Bouchet.
    \newblock Maps and delta-matroids.
    \newblock {\em Discret. Math.}, 78(1):59--71, 1989.
    
    \bibitem{Dunaykin2019TransitionPA}
    A.Dunaykin and V.Zhukov.
    \newblock Transition polynomial as a weight system for binary delta-matroids.
    \newblock {\em Moscow Mathematical Journal}, 22(1):69--81, 2022.
    
    \bibitem{Chun2018OnTI}
    C.Chun, I.Moffatt, S.D.Noble, and R.Rueckriemen.
    \newblock On the interplay between embedded graphs and delta-matroids.
    \newblock {\em Proceedings of the London Mathematical Society},
      118(1):675--700, 2019.
    
    \bibitem{BarNatan1995OnTV}
    D.Bar-Natan.
    \newblock On the {V}assiliev knot invariants.
    \newblock {\em Topology}, 34(1):423--472, 1995.
    
    \bibitem{BarNatan2015ProofOA}
    D.Bar-Natan and H.T.Vo.
    \newblock Proof of a conjecture of {K}ulakova et al. related to the $sl(2)$
      weight system.
    \newblock {\em Eur. J. Comb.}, 45(1):65--70, 2015.
    
    \bibitem{BarNatan1996OnTM}
    D.Bar-Natan and S.Garoufalidis.
    \newblock On the {M}elvin–{M}orton–{R}ozansky conjecture.
    \newblock {\em Inventiones mathematicae}, 125(1):103--133, 1996.
    
    \bibitem{K21}
    E.Krasilnikov.
    \newblock An extension of the $sl_2$ weight system to graphs with $n\le 8$
      vertices.
    \newblock {\em Arnold Mathematical Journal}, 7(4):609--618, 2021.
    
    \bibitem{Kulakova2014OnAW}
    E.Kulakova, S.Lando, T.Mukhutdinova, and G.Rybnikov.
    \newblock On a weight system conjecturally related to $sl_2$.
    \newblock {\em Eur. J. Comb.}, 41(4):266--277, 2014.
    
    \bibitem{S01}
    E.Soboleva.
    \newblock {V}assiliev knot invariants coming from {L}ie algebras and
      $4$-invariants.
    \newblock {\em J. Knot Theory and its Ramifications}, 10(11):161--169, 2001.
    
    \bibitem{NZ}
    M.Nenasheva and V.Zhukov.
    \newblock An extension of {S}tanley's chromatic symmetric function to binary
      delta-matroids.
    \newblock {\em Discrete Math.}, 344(11):112549, 2021.
    
    \bibitem{K20}
    N.Kodaneva.
    \newblock The interlace polynomial of binary delta-matroids and link
      invariants.
    \newblock {\em arXiv:2002.12440v1}, 61(1):1--20, 2020.
    
    \bibitem{JR}
    S.A.Joni and G.C.Rota.
    \newblock Coalgebras and bialgebras in combinatorics.
    \newblock {\em Studies in Applied Mathematics}, 61(1):9--139, 1979.
    
    \bibitem{Chmutov2009}
    S.Chmutov.
    \newblock Generalized duality for graphs on surfaces and the signed
      {B}ollob\'as--{R}iordan polynomial.
    \newblock {\em J. of Combin. Theory Ser. B}, 99(1):617--638, 2009.
    
    \bibitem{CKL2020}
    S.Chmutov, M.Kazarian, and S.Lando.
    \newblock Polynomial graph invariants and the {K}{P} hierarchy.
    \newblock {\em Selecta Math. New Series}, 26(1):1--22, 2020.
    
    \bibitem{Chmutov2012IntroductionTV}
    S.Chmutov, S.Duzhin, and J.Mostovoy.
    \newblock {\em Introduction to {V}assiliev Knot Invariants}.
    \newblock Cambridge University Press, 2012.
    \newblock Series 2.
    
    \bibitem{lando1997primitive}
    S.Lando.
    \newblock On primitive elements in the bialgebra of chord diagrams.
    \newblock {\em Translations of the American Mathematical Society-Series 2},
      180(4):167--174, 1997.
    
    \bibitem{Lando2000OnAH}
    S.Lando.
    \newblock On a {H}opf algebra in graph theory.
    \newblock {\em J. Comb. Theory, Ser. B}, 80(4):104--121, 2000.
    
    \bibitem{Lando2003GraphsOS}
    S.Lando and A.Zvonkin.
    \newblock {\em Graphs on Surfaces and Their Applications}.
    \newblock Series 2. Springer, 2003.
    \newblock Issue 2.
    
    \bibitem{Lando2016DeltamatroidsAV}
    S.Lando and V.Zhukov.
    \newblock Delta-matroids and {V}assiliev invariants.
    \newblock {\em Moscow Mathematical Journal}, 17(4):741--755, 2017.
    
    \bibitem{S}
    W.R.Schmitt.
    \newblock Incidence {H}opf algebras.
    \newblock {\em Journal of Pure and Applied Algebra}, 96(11):299--330, 1994.
    
    \end{thebibliography}
\end{document}